\definecolor{forestgreen}{rgb}{0.13, 0.55, 0.13}
\definecolor{lightblue}{rgb}{0.68, 0.85, 0.9}
\def\e{{\rm e}}
\def\i{{\rm i}}
\def\C{{\mathbb C}}
\def\N{{\mathbb N}}
\def\Re {{\mathbb R}}
\def\K{{\mathscr{K}}}
\DeclareMathOperator\Real {Re}
\definecolor{mygray}{rgb}{0.9,0.9,0.9}
\begin{document}
\theoremstyle{plain}
\newtheorem{theorem}{Theorem}[section]
\newtheorem{lemma}{Lemma}[section]
\newtheorem{proposition}{Proposition}[section]
\newtheorem{corollary}{Corollary}[section]

\theoremstyle{definition}
\newtheorem{definition}[corollary]{Definition}

\newtheorem{example}{Example}[section]

\newtheorem{remark}{Remark}[section]
\newtheorem{remarks}[remark]{Remarks}
\newtheorem{note}{Note}
\newtheorem{case}{Case}

\numberwithin{equation}{section}
\numberwithin{table}{section}
\numberwithin{figure}{section}

\title[Six-step BDF method]
{The energy technique for the six-step BDF method}
\author[Georgios Akrivis]{Georgios Akrivis}
\address{Department of Computer Science and Engineering, University of Ioannina, 451$\,$10 Ioannina, Greece,
and Institute of Applied and Computational Mathematics, FORTH, 700$\,$13 Heraklion, Crete, Greece}
\email {\href{mailto:akrivis@cse.uoi.gr}{akrivis{\it @\,}cse.uoi.gr}}

\author[Minghua Chen]{Minghua Chen}
\address{School of Mathematics and Statistics, Gansu Key Laboratory of Applied Mathematics and Complex Systems,
 Lanzhou University, Lanzhou 730000, P.R. China}
\email {\href{mailto:chenmh@lzu.edu.cn}{chenmh{\it @\,}lzu.edu.cn}}

\author[Fan Yu]{Fan Yu}
\address{School of Mathematics and Statistics, Gansu Key Laboratory of Applied Mathematics and Complex Systems,
 Lanzhou University, Lanzhou 730000, P.R. China}
\email {\href{mailto:yuf17@lzu.edu.cn}{yuf17{\it @\,}lzu.edu.cn}}

\author[Zhi Zhou]{Zhi Zhou}
\address{Department of Applied Mathematics, The Hong Kong Polytechnic University, Kowloon, Hong Kong, P.R. China}
\email {\href{mailto:zhizhou@polyu.edu.hk}{zhizhou{\it @\,}polyu.edu.hk}}

\thanks{
The research of M.\ Chen and F.\ Yu is partially supported by NSFC 11601206,
and the research of Z.\ Zhou is supported by Hong Kong RGC grant (No. 25300818).
}

\date{\today}

\keywords{Six-step BDF method, multipliers, parabolic equations, stability estimate, energy technique}
\subjclass[2010]{Primary 65M12, 65M60; Secondary 65L06.}

\begin{abstract}
In combination with the Grenander--Szeg\"o theorem, we observe that a relaxed
positivity condition on multipliers, milder than the   basic   
requirement of the
Nevanlinna--Odeh multipliers that the sum of the absolute values of their components
is strictly less than $1$,  makes the energy technique applicable to the stability
analysis of BDF methods for parabolic equations with selfadjoint elliptic part.
This is particularly useful for the six-step BDF method
for which no Nevanlinna--Odeh multiplier exists.
We introduce multipliers satisfying the positivity property for the six-step BDF method
and establish stability of the method for parabolic equations. 
\end{abstract}

\maketitle


\section{Introduction}\label{Se:intro}
Let $T >0, u^0\in H,$ and consider the initial value
problem of seeking $u \in C((0,T];D(A))\cap C([0,T];H)$ satisfying
\begin{equation}
\label{ivp}
\left \{
\begin{aligned}
&u' (t) + Au(t) = 0, \quad 0<t<T,\\
& u(0)=u^0 ,
\end{aligned}
\right .
\end{equation}
with  $A$ a positive definite, selfadjoint, linear operator on a
Hilbert space $(H, (\cdot , \cdot )) $ with domain  $D(A)$
dense in $H.$ 

We consider the $q$-step backward difference formula (BDF) method, generated by the polynomials  $\alpha$ and $\beta,$
\begin{equation}
\label{BDF1}
\alpha (\zeta)= \sum_{j=1}^q \frac 1j \zeta^{q-j} (\zeta-1)^j
=\sum_{j=0}^q \alpha_j \zeta^j,
\quad \beta(\zeta)=\zeta^q.
\end{equation}
The BDF methods are $A(\vartheta_q)$-stable with
$\vartheta_1=\vartheta_2=90^\circ,
\vartheta_3\approx 86.03^\circ, \vartheta_4\approx 73.35^\circ, \vartheta_5\approx 51.84^\circ$ and $\vartheta_6 \approx 17.84^\circ$;
see \cite[Section V.2]{HW}. Exact values of $\vartheta_q, q=3,4,5,6,$
are given in \cite{AK2}. The order of the $q$-step method is $q.$

Let $N\in \N,$ $\tau:=T/N$ be the time step, and $t^n :=n \tau,
n=0,\dotsc ,N,$ be a uniform partition of the interval $[0,T].$
We recursively define a sequence of approximations $u^m$ to
the nodal values $ u(t^m)$ by the $q$-step BDF method,
\begin{equation}
\label{ab}
\sum_{i=0}^q \alpha_i u^{n+i}+\tau A u^{n+q}=0,\quad n=0,\dotsc,N-q,
\end{equation}
assuming that starting approximations $u^0, \dotsc, u^{q-1}$ are given.

Let $| \cdot |$  denote the norm on $H$ induced by the inner product $(\cdot , \cdot )$, and introduce on $V, V:=D(A^{1/2}),$
the norm $\| \cdot \|$   by $\| v\| :=| A^{1/2} v |.$
We identify $H$ with its dual, and denote by $V'$ the dual of $V$,
and by $\| \cdot \|_\star$ the dual norm on $V', \|v \|_\star=| A^{-1/2} v |.$
We shall use the notation $(\cdot , \cdot )$ also for the antiduality
pairing between $V'$ and $V.$

Stability of the A-stable one- and two-step BDF methods  \eqref{ab}
can be easily established by the energy method. The powerful
 Nevanlinna--Odeh multiplier technique extends the applicability
 of the energy method to the non A-stable three-, four- and five-step BDF methods.
In contrast, as we shall see, no Nevanlinna--Odeh multiplier exists for the six-step BDF method.
Here, we show that, in combination with the Grenander--Szeg\"o theorem,
the energy technique is applicable even with multipliers satisfying milder
requirements than Nevanlinna--Odeh multipliers. We introduce such
multipliers for the six-step BDF method and prove stability by the
energy technique.

An outline of the paper is as follows: In Section \ref{Se:mult},  we relax the requirements
on the multipliers for BDF methods and present multipliers for the six-step
BDF method. In Section \ref{Se:stab}, we use a new multiplier in combination with
the Grenander--Szeg\"o theorem and prove stability of the six-step BDF method
for the initial value problem \eqref{ivp}.

\section{Multipliers for the six-step BDF method}\label{Se:mult}

Multipliers for the three-, four- and five-step BDF methods were introduced
by Nevanlinna and Odeh already in 1981, see \cite{NO}, to make the energy
method applicable to the stability analysis of these methods for parabolic
equations; no multipliers are required for the A-stable one- and two-step BDF methods.
The multiplier technique became widely known and popular after its first actual application
to the stability analysis  for parabolic equations by  Lubich, Mansour, and Venkataraman
in 2013; see \cite{LMV}.

The multiplier technique hinges on the celebrated equivalence of A- and G-stability for multistep methods
by Dahlquist; see \cite{D}.

\begin{lemma}[\cite{D}; see also \cite{BC} and
{\cite[Section V.6]{HW}}]\label{Le:Dahl}
Let $\alpha(\zeta)=\alpha_q\zeta^q+\dotsb+\alpha_0$ and
$\mu(\zeta)=\mu_q\zeta^q+\dotsb+\mu_0$ be polynomials,
with real coefficients, of degree
at most $q\ ($and at least one of them of degree $q)$
that have no common divisor.
Let $(\cdot,\cdot)$ be a real inner product with associated norm $|\cdot|.$
If
\begin{equation}
\label{A}
\Real \frac {\alpha(\zeta)}{\mu(\zeta)}>0\quad\text{for }\, |\zeta|>1,
\tag{A}
\end{equation}
then there exists a positive definite symmetric matrix $G=(g_{ij})\in \Re^{q,q}$
and real $\delta_0,\dotsc,\delta_q$ such that for $v^0,\dotsc,v^{q}$ in the inner product space,
\begin{equation}
\label{G}
 \Big (\sum_{i=0}^q\alpha_iv^{i},\sum_{j=0}^q\mu_jv^{j}\Big )=
\sum_{i,j=1}^qg_{ij}(v^{i},v^{j})
-\sum_{i,j=1}^qg_{ij}(v^{i-1},v^{j-1})
+\Big |\sum_{i=0}^q\delta_iv^{i}\Big |^2.    
\tag{G}
\end{equation}
\end{lemma}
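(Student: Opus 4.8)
The plan is to prove the stated implication, that condition \eqref{A} yields the G-stability identity \eqref{G}; the reverse implication, which is not needed here, is the elementary direction of Dahlquist's equivalence and I shall set it aside. First I would strip the problem of its Hilbert-space clothing. Since the inner product is symmetric, the left-hand side of \eqref{G} equals $\sum_{i,j=0}^{q}m_{ij}(v^{i},v^{j})$ with the symmetric matrix $M=(m_{ij})$, $m_{ij}=\tfrac12(\alpha_i\mu_j+\alpha_j\mu_i)$. Writing $G_{+}$ for the $(q+1)\times(q+1)$ matrix that carries a sought symmetric $G=(g_{ij})_{i,j=1}^{q}$ in its rows and columns $1,\dots,q$ (and zeros in row and column $0$), and $G_{-}$ for the same $G$ placed in rows and columns $0,\dots,q-1$, the whole identity \eqref{G} becomes the single algebraic equation
\[
M=G_{+}-G_{-}+\delta\delta^{\top},\qquad \delta=(\delta_0,\dots,\delta_q)^{\top}.
\]
Because this is an identity between real symmetric matrices, solving it over $\mathbb{R}$ (scalar $v^{i}$) automatically delivers \eqref{G} in every real inner product space, so the abstract setting adds no difficulty once the matrix equation is understood.

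Next I would read condition \eqref{A} on the unit circle. For $\zeta=e^{\mathrm{i}\theta}$ the real coefficients give $\overline{\mu(\zeta)}=\mu(\bar\zeta)$, and a direct computation shows that the quadratic form of $M$ at $\mathbf{e}(\theta):=(1,e^{\mathrm{i}\theta},\dots,e^{\mathrm{i}q\theta})^{\top}$ satisfies $\mathbf{e}(\theta)^{*}M\,\mathbf{e}(\theta)=\Real\bigl[\alpha(\zeta)\overline{\mu(\zeta)}\bigr]=|\mu(\zeta)|^{2}\,\Real[\alpha(\zeta)/\mu(\zeta)]$. The hypothesis $\Real[\alpha/\mu]>0$ for $|\zeta|>1$, together with continuity up to the boundary (and the no-common-divisor assumption, which prevents $\alpha$ and $\mu$ from vanishing simultaneously), forces the trigonometric polynomial $\Real[\alpha\overline{\mu}]$ to be nonnegative on $|\zeta|=1$.

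The decisive algebraic step is then a factorization. By the Fej\'er--Riesz theorem the nonnegative trigonometric polynomial $\Real[\alpha\overline{\mu}]$, of degree $q$, equals $|\delta(\zeta)|^{2}$ on the circle for a polynomial $\delta(\zeta)=\sum_{i=0}^{q}\delta_i\zeta^{i}$, and its coefficients furnish the square term $|\sum_i\delta_i v^{i}|^{2}$. Setting $R:=M-\delta\delta^{\top}$, one has $\mathbf{e}(\theta)^{*}R\,\mathbf{e}(\theta)=0$ for all $\theta$, which is exactly the statement that every diagonal of $R$ sums to zero. A short dimension count, together with injectivity of the map $G\mapsto G_{+}-G_{-}$, shows that the symmetric matrices with vanishing diagonal sums are precisely the shift-coboundaries $G_{+}-G_{-}$; this identifies a unique symmetric $G$ solving the displayed equation, and the coboundary structure is exactly the telescoping that turns the bilinear form into a difference of consecutive energies.

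I expect the main obstacle to be the one remaining point, that this $G$ is \emph{positive definite} rather than merely symmetric. This is the content of the discrete positive-real (Kalman--Yakubovich--Popov) lemma: the solution of the coboundary/Stein equation above is positive definite precisely because the inequality in \eqref{A} is \emph{strict} on the open exterior $|\zeta|>1$. I would secure it by testing the identity on geometric sequences $v^{i}=\zeta^{i}w$ and exploiting the strict sign, equivalently by observing that the construction is reversible, so that the strictness in \eqref{A} feeds directly into the definiteness of $G$. The genuinely delicate case is that in which $\Real[\alpha\overline{\mu}]$ has zeros on $|\zeta|=1$: there the Fej\'er--Riesz factor $\delta$ acquires boundary roots, and it is exactly this mechanism that both produces a nontrivial $\delta_i$-term and threatens to degrade definiteness into mere semidefiniteness, so it must be handled with care.
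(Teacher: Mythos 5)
The paper does not actually prove Lemma \ref{Le:Dahl}: it is quoted from Dahlquist \cite{D} (see also \cite{BC} and \cite[Section V.6]{HW}), so your proposal has to be measured against those classical proofs. Up to the final step your skeleton is sound and is essentially the standard algebraic route: the reduction of \eqref{G} to the matrix equation $M=G_{+}-G_{-}+\delta\delta^{\top}$ with $m_{ij}=\tfrac12(\alpha_i\mu_j+\alpha_j\mu_i)$ is valid in any real inner product space; the computation $\mathbf e(\theta)^{*}M\,\mathbf e(\theta)=\Real\bigl[\alpha(\zeta)\overline{\mu(\zeta)}\bigr]\geqslant 0$ on $|\zeta|=1$ follows from \eqref{A} by continuity; the Fej\'er--Riesz factorization with a real-coefficient factor $\delta$ is legitimate; and your dimension count correctly identifies the symmetric matrices with vanishing diagonal sums with the coboundaries $G_{+}-G_{-}$, so for each admissible factor $\delta$ there is a unique symmetric $G$ satisfying \eqref{G}.

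The genuine gap is the positive definiteness of $G$, which is the real content of Dahlquist's theorem, and none of your three suggestions closes it. Appealing to the discrete positive-real (Kalman--Yakubovich--Popov) lemma is circular: in this scalar setting that lemma \emph{is} the statement to be proved. Testing on $v^{i}=\zeta^{i}w$ (in the complexified, Hermitian form of \eqref{G}) amounts to the identity
\begin{equation*}
(|\zeta|^{2}-1)\,\mathbf z^{*}G\,\mathbf z
=\Real\bigl[\alpha(\zeta)\overline{\mu(\zeta)}\bigr]-|\delta(\zeta)|^{2},
\qquad \mathbf z=(1,\zeta,\dotsc,\zeta^{q-1})^{\top},
\end{equation*}
so even semidefiniteness on these Vandermonde vectors requires $\Real[\alpha\overline{\mu}]\geqslant|\delta|^{2}$ for $|\zeta|>1$, an inequality that \eqref{A} does not supply (it bounds $\Real[\alpha\overline{\mu}]$ below by $0$, not by $|\delta|^{2}$) and that is essentially equivalent to what you are proving; moreover, positivity of a Hermitian form on a one-parameter family of test vectors would not give definiteness anyway. ``The construction is reversible'' argues the wrong implication: reversibility shows that a definite $G$ yields \eqref{A}, not the converse. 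The dependence on the factor is not a side issue: $\delta$ is unique only up to reflecting roots across the unit circle and up to powers of $\zeta$, and your coboundary bijection assigns a \emph{different} $G$ to each choice. For instance, for $\alpha(\zeta)=\zeta^{2}-\zeta$, $\mu(\zeta)=\zeta^{2}$ (these share the factor $\zeta$, but your pipeline nowhere uses coprimality, so it applies verbatim, and \eqref{A} holds strictly), the factor $\delta(\zeta)=(\zeta^{2}-\zeta)/\sqrt2$ produces the singular matrix with $g_{22}=\tfrac12$, $g_{11}=g_{12}=0$, while $\delta(\zeta)=(1-\zeta)/\sqrt2$ produces a definite $G$: nothing in your proposal selects the right factor or excludes the singular outcome, and the no-common-divisor hypothesis -- which is exactly where the cited proofs (e.g.\ \cite{BC}, via a Riesz--Herglotz representation of the positive-real function $\alpha/\mu$, writing $G$ as a superposition of positive semidefinite elementary forms) extract strict definiteness -- never enters your argument. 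Finally, the boundary zeros you defer as a delicate special case are unavoidable: $\alpha(1)=0$ for every consistent method, hence $\delta(1)=0$ always, so in the paper's application to the six-step BDF method the case you postpone is the only case that occurs.
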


\begin{definition}[Multipliers and Nevanlinna--Odeh multipliers]\label{De:mult}
Let $\alpha$ be the generating polynomial of the $q$-step BDF method defined in \eqref{BDF1}.
Consider a $q$-tuple $(\mu_1,\dotsc,\mu_q)$ of real numbers such that
with the given $\alpha$ and
$\mu(\zeta):=\zeta^q-\mu_1\zeta^{q-1}-\dotsb-\mu_q,$
the pair $(\alpha,\mu)$ satisfies the A-stability condition \eqref{A},
and, in addition, the polynomials $\alpha$ and $\mu$ have no common divisor.
Then, we call $(\mu_1,\dotsc,\mu_q)$  \emph{Nevanlinna--Odeh multiplier} for the
$q$-step BDF method if
\begin{equation}
\label{NO-multiplier}
1-|\mu_1|-\dotsb-|\mu_q|>0, \tag{P1}
\end{equation}
and simply \emph{multiplier} if it satisfies the \emph{positivity} property
\begin{equation}
\label{pos-prop}
1-\mu_1\cos x-\dotsb-\mu_q\cos (qx) >0 \quad \forall x \in \Re. \tag{P2}
\end{equation}
\end{definition}


Notice that, with the notation of this definition, \eqref{A} and \eqref{G}, respectively,
mean that the $q$-step scheme described by the parameters
$\alpha_q,\dotsc,\alpha_0,1,-\mu_1,\dotsc,-\mu_q$
and the corresponding one-leg method are A- and G-stable, respectively.
Of course, these are necessarily low order methods but this is irrelevant here;
we do not compute with them; we only use them to establish stability of
the $q$-step BDF method.

Optimal Nevanlinna--Odeh multipliers, i.e., the ones with minimal $|\mu_1|+\dotsb+|\mu_q|$,
for the three-, four- and five-step BDF methods were given in \cite{AK1}.

Some comments on the requirements in Definition \ref{De:mult} and
their role in the stability analysis are in order.
To prove stability of the method by the energy technique, we test \eqref{ab}
by $u^{n+q}-\mu_1u^{n+q-1}-\dotsb-\mu_qu^n$ and obtain
\begin{equation}
\label{ab-energy}
\Big (\sum_{i=0}^q \alpha_i u^{n+i},u^{n+q}-\sum_{j=1}^q \mu_ju^{n+q-j}\Big )
+\tau \Big (A u^{n+q},u^{n+q}-\sum_{j=1}^q \mu_j u^{n+q-j}\Big ) =0,
\end{equation}
$n=0,\dotsc,N-q.$ The first term on the left-hand side can be estimated
from below using \eqref{G}; this is the motivation for the requirement \eqref{A}.
Which one of the other two conditions, \eqref{NO-multiplier} or \eqref{pos-prop},
enters into the stability analysis, depends on the way we handle the second term
on the left-hand side of \eqref{ab-energy}.
If we estimate this term from below at every time level and then sum over $n$,
requirement \eqref{NO-multiplier} is crucial; cf., e.g., \cite{AL}, \cite{A2}, \cite{AK1}.
Instead, if we sum over $n$ and subsequently estimate the sum of the second
terms, the relaxed positivity condition \eqref{pos-prop} suffices.
In the latter approach, in view of the Grenander--Szeg\"o theorem,
\eqref{pos-prop} ensures that symmetric band Toeplitz matrices, of any dimension,
with generating function the positive trigonometric polynomial
$(1-\varepsilon)-\mu_1\cos x-\dotsb-\mu_q\cos (qx),$
for sufficiently small $\varepsilon$, are positive definite;
see section \ref{Se:stab}.

It is well known that any multiplier for the $q$-step BDF method  satisfies the
property
\begin{equation}
\label{multiplier-lower-b}
|\mu_1|+\dotsb+|\mu_q|\geqslant \cos\vartheta_q;
\end{equation}
see \cite{NO}. In particular, for the six-step BDF method this means that $|\mu_1|+\dotsb+|\mu_6|\geqslant 0.9516169.$
Actually, as we shall see, no Nevanlinna--Odeh multiplier exists for the six-step BDF method; see Remark \ref{rem:mult-general}.
This was the motivation for our relaxation on the requirements for multipliers.
Fortunately, the relaxed positive condition \eqref{pos-prop} leads to a positive result.

\begin{proposition}[A multiplier for the six-step BDF method]\label{Pr:mult-six}
The set of numbers
\begin{equation}
\label{mu2}
\mu_1=\frac {13}{9},\quad \mu_2=-\frac {25}{36},\quad \mu_3=\frac 19,\quad \mu_4=\mu_5=\mu_6=0,
\end{equation}
is a multiplier for the six-step BDF method.
\end{proposition}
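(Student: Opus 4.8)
The plan is to verify, for the six-tuple in \eqref{mu2} and the associated polynomial
$\mu(\zeta)=\zeta^6-\frac{13}{9}\zeta^5+\frac{25}{36}\zeta^4-\frac19\zeta^3$, the three requirements of Definition \ref{De:mult}: the A-stability condition \eqref{A}, the absence of a common divisor of $\alpha$ and $\mu$, and the positivity property \eqref{pos-prop}. The computation that organizes everything is the factorization
\begin{equation*}
\mu(\zeta)=\zeta^3\Big(\zeta-\tfrac12\Big)^2\Big(\zeta-\tfrac49\Big),
\end{equation*}
so that all zeros of $\mu$ are $0$ (triple), $\frac12$ (double) and $\frac49$ (simple), all lying strictly inside the unit disk. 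I would establish this factorization first, since it immediately settles the location of the poles of $\alpha/\mu$ and reduces the no-common-divisor requirement to checking that $\alpha$ does not vanish at $0,\frac12,\frac49$; since the constant term of $\alpha$ is $\frac16\neq0$, only $\alpha(\tfrac12)\neq0$ and $\alpha(\tfrac49)\neq0$ remain, and these are elementary evaluations.

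For the positivity property \eqref{pos-prop} I would substitute $c=\cos x$ and use the Chebyshev identities $\cos 2x=2c^2-1$, $\cos 3x=4c^3-3c$ to rewrite $1-\mu_1\cos x-\mu_2\cos 2x-\mu_3\cos 3x$, after multiplication by $36$, as the cubic
\begin{equation*}
g(c)=-16c^3+50c^2-40c+11,
\end{equation*}
and then show $g(c)>0$ on $[-1,1]$ by inspecting the values at the endpoints and at the unique critical point of $g$ in $[-1,1]$. It is worth recording here that $|\mu_1|+|\mu_2|+|\mu_3|=\frac94>1$, so \eqref{NO-multiplier} fails decisively and the relaxed condition \eqref{pos-prop} is genuinely needed, in line with \eqref{multiplier-lower-b} and the nonexistence of a Nevanlinna--Odeh multiplier.

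The main work is condition \eqref{A}. My plan is to exploit that, by the factorization above, $h:=\alpha/\mu$ is holomorphic on the closed exterior $\{|\zeta|\ge 1\}$ of the Riemann sphere, with
$h(\infty)=\alpha_6=\sum_{j=1}^6\frac1j=\frac{49}{20}>0$. Since $\Real h$ is then harmonic on $\{|\zeta|>1\}\cup\{\infty\}$ and continuous up to the boundary, the minimum principle for harmonic functions yields that $\Real h$ attains its minimum on the circle $|\zeta|=1$; as $\Real h$ is nonconstant (it equals $\frac{49}{20}$ at $\infty$), strict positivity \eqref{A} in the open exterior follows once we prove the boundary nonnegativity
\begin{equation*}
\Real\big(\alpha(e^{\i x})\,\overline{\mu(e^{\i x})}\big)\ge 0\qquad\forall x\in\Re,
\end{equation*}
which is equivalent to $\Real h\ge 0$ on the circle because the denominator is $|\mu(e^{\i x})|^2\ge 0$.

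I expect this boundary inequality to be the main obstacle. Expanding $\alpha(e^{\i x})\overline{\mu(e^{\i x})}$ gives a Laurent polynomial in $e^{\i x}$ whose real part is a trigonometric polynomial that I would convert, again via Chebyshev, into a polynomial in $\cos x$ to be shown nonnegative on $[-1,1]$. A useful structural check and guide is that $\alpha(1)=0$ while $\mu(1)=\frac{5}{36}\neq0$, so $\Real h(1)=0$: the boundary minimum is $0$ and is attained exactly at $\zeta=1$, meaning the trigonometric polynomial must have a double zero at $x=0$. I would use this to factor out the corresponding nonnegative factor and reduce the claim to the positivity of a lower-degree polynomial, thereby confirming \eqref{A} and completing the verification that \eqref{mu2} is a multiplier for the six-step BDF method.
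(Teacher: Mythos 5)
Your plan coincides step for step with the paper's own proof: the factorization $\mu(\zeta)=\zeta^3\big(\zeta-\frac12\big)^2\big(\zeta-\frac49\big)$, the reduction of the no-common-divisor requirement to $\alpha(0),\alpha(\frac12),\alpha(\frac49)\neq 0$, the maximum/minimum-principle reduction of \eqref{A} to $\Real\big[\alpha(\e^{\i\varphi})\mu(\e^{-\i\varphi})\big]\geqslant 0$ on the unit circle (legitimate, since the factorization puts all zeros of $\mu$ strictly inside the disk, so $|\mu(\e^{\i\varphi})|^2>0$ there), and the Chebyshev substitution for \eqref{pos-prop}. Your treatment of \eqref{pos-prop} is complete and correct: your cubic $g(c)=-16c^3+50c^2-40c+11$ is $36$ times the trigonometric expression with constant term $1$, its only critical point in $[-1,1]$ is $c^\star=(25-\sqrt{145})/24\approx 0.54$, and $g(c^\star)\approx 1.46>0$ with $g(\pm1)>0$. (The paper proves the marginally stronger statement with $\frac{31}{32}$ in place of $1$ — the resulting polynomial is $p$ of \eqref{p}, with minimum $\approx 0.0093$ — because the margin $\frac1{32}$ is exactly what is harvested in the energy estimate of Theorem \ref{Th:stab}; for the proposition as stated your version suffices.)

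The genuine gap is in the A-stability part, and it is precisely the step you yourself flag as the main obstacle: the boundary inequality is never established. Everything you write about it is a reduction scheme; the actual content of \eqref{A} for this particular $\mu$ is the positivity of the quintic left over after the factor $1-x$ (with $x=\cos\varphi$) is extracted, and you neither compute that quintic nor bound it. The paper does: it computes $60\alpha(\e^{\i\varphi})\e^{-3\i\varphi}=4(1-x)(8+59x-157x^2)+4\i\,(137x^2-144x+22)\sin\varphi$ and $36\e^{-3\i\varphi}-52\e^{-2\i\varphi}+25\e^{-\i\varphi}-4=(144x^3-104x^2-83x+48)-\i\,(144x^2-104x-11)\sin\varphi$, uses $\sin^2\varphi=(1-x)(1+x)$, and obtains the real part $4(1-x)P(x)$ with $P(x)=2\,(71+611x+1334x^2-5150x^3+4784x^4-1440x^5)$; positivity of $P$ on $[-1,1]$ then follows because $71+611x+1334x^2$ has negative discriminant (which settles $x\leqslant 0$, all remaining terms being nonnegative there), while for $0\leqslant x\leqslant 1$ the bound $71+611x\geqslant 682x^2$ gives $P(x)\geqslant 2x^2\big[(2016-5150x+3344x^2)+1440x^2(1-x)\big]$ with $2016-5150x+3344x^2$ again discriminant-negative. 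Without this (or an equivalent) verification the proposition is not proved: the double zero at $\varphi=0$ that you correctly predict from $\alpha(1)=0$ explains only the factor $1-x$, and whether the cofactor is positive is exactly what separates a valid multiplier from an invalid one — in Proposition \ref{prop:no-2m} the identical reduction produces a $P$ whose required nonnegativity \emph{fails} for every $(\mu_1,\mu_2)$ compatible with \eqref{pos-prop}, so this final check carries the real mathematical content and cannot be waved through.
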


\begin{proof}
The proof consists of two parts; we first prove the A-stability property
 \eqref{A} and subsequently the positivity property \eqref{pos-prop}.

\emph{A-stability property \eqref{A}}.
The corresponding polynomial $\mu$ is
\begin{equation}
\label{mu1}
\mu(\zeta)=\zeta^3\big (\zeta-\frac 12\big )^2\big (\zeta-\frac 49\big )
= \zeta^6-\frac{13}{9}\zeta^5+\frac{25}{36}\zeta^4-\frac 19\zeta^3
=\frac 1{36}\zeta^3(36\zeta^3-52\zeta^2+25\zeta-4).
\end{equation}

We recall the generating polynomial $\alpha$ of the six-step BDF method,
\begin{equation*}
\label{alpha1}
60 \alpha(\zeta)=147\zeta^6-360\zeta^5+450\zeta^4-400\zeta^3+225\zeta^2- 72\zeta+10.
\end{equation*}
First, $\alpha(1/2)=-37/3840$ and $\alpha(4/9)=-0.003730423508913,$ whence
the polynomials $\alpha$ and $\mu$ have no common divisor.

Now, $\alpha(z)/\mu (z)$ is holomorphic outside the unit disk in the
complex plane, and
\[\lim_{|z|\to \infty}\frac {\alpha(z)}{\mu (z)}=\alpha_6=\frac {147}{60}>0.\]
Therefore, according to the maximum principle for harmonic functions,
the A-stability property \eqref{A} is equivalent to
\begin{equation*}
\Real \frac {\alpha(\zeta)}{\mu(\zeta)}\geqslant 0 \quad \forall \zeta\in \K,
\end{equation*}
with $\K$ the unit circle in the complex plane, $\K:=\{\zeta\in \C : |\zeta|=1\},$
i.e., equivalent to
\begin{equation}
\label{Real1}
\Real \big [\alpha(\e^{\i \varphi})\mu (\e^{-\i \varphi})\big ]\geqslant 0 \quad \forall \varphi \in \Re.
\end{equation}
%


In view of \eqref{mu1}, the desired property \eqref{Real1} takes the form
\begin{equation}
\label{Real2}
\Real \big [60\alpha(\e^{\i \varphi})
\e^{-\i 3\varphi} \big (36\e^{-\i 3\varphi}-52\e^{-\i 2\varphi}+25 \e^{-\i \varphi}-4\big )\big ]\geqslant 0 \quad \forall \varphi \in \Re.
\end{equation}

Now, it is easily seen that
\begin{equation*}
\begin{aligned}
60\alpha(\e^{\i \varphi})\e^{-\i 3\varphi}
&=\big [157 \cos(3\varphi)-432 \cos(2\varphi) +675 \cos\varphi-400\big ]\\
&+\i\big [137 \sin(3\varphi)-288 \sin(2\varphi) +225 \sin\varphi\big ].
\end{aligned}
\end{equation*}
With $x:=\cos\varphi,$ recalling the elementary trigonometric identities
\[\cos(2\varphi) =2x^2-1,\  \cos(3\varphi) =4x^3-3x,\  \sin(2\varphi) =2x\sin\varphi,\  \sin(3\varphi) =(4x^2-1)\sin\varphi,\]
%
we easily see that
\begin{equation}
\label{Real3}
60\alpha(\e^{\i \varphi})\e^{-\i 3\varphi}
=4(1-x)(8+59x- 157x^2)+\i4 (137x^2-144x+22)\sin\varphi.
\end{equation}
%
Notice that the factor $1-x$ in the real part of $\alpha(\e^{\i \varphi})\e^{-\i 3\varphi}$
is due to the fact that $\alpha(1)=0.$ Similarly,
\[\begin{aligned}
36\e^{-\i 3\varphi}-52\e^{-\i 2\varphi}+25 \e^{-\i \varphi}-4
&=\big [36 \cos(3\varphi)-52 \cos(2\varphi) +25 \cos\varphi-4\big ]\\
&-\i\big [36 \sin(3\varphi)-52 \sin(2\varphi) +25 \sin\varphi\big ]
\end{aligned}\]
and
\begin{equation}
\label{Real4}
\begin{aligned}
36\e^{-\i 3\varphi}-52\e^{-\i 2\varphi}+25 \e^{-\i \varphi}-4
&= (144x^3-104x^2-83x+48 )\\
&-\i (144x^2-104x-11 )\sin\varphi.
\end{aligned}
\end{equation}
In view of \eqref{Real3} and \eqref{Real4}, the desired property  \eqref{Real2}
can be written in the form
\begin{equation}
\label{Real5}
4(1-x)P(x)\geqslant 0 \quad \forall x \in [-1,1]
\end{equation}
with
\[\begin{aligned}
P(x)&:=(8+59x-157x^2)(144x^3-104x^2-83x+48)\\
&+(1+x)(137x^2-144x+22)(144x^2-104x-11),
\end{aligned}\]
i.e.,
\begin{equation}
\label{Real6}
P(x)=2(71+611x+1334x^2-5150x^3+4784x^4-1440x^5).
\end{equation}

It is now easy to see that $P$ is positive in the interval $[-1,1],$ and thus that  \eqref{Real2} is valid.
First, the quadratic polynomial $ 71+611x+ 1334x^2$ is positive for all real $x$,
since it does not have real roots. All other terms are positive for negative $x,$
whence $P(x)$ is positive for negative $x.$ Furthermore, for $0\leqslant x  \leqslant 1,$ we obviously have
$71+611x\geqslant  682x^2,$ and can estimate $P(x)$ from below as follows
\[\begin{aligned}
P(x)&\geqslant 2x^2 (2016-5150x+4784x^2-1440x^3)\\
&=2x^2 \big [(2016-5150x+3344x^2)+1440x^2(1-x) \big ].
\end{aligned}\]
Again, the quadratic polynomial  $2016-5150x+3344x^2$ is positive for all real $x$, and the positivity of $P(x)$ follows.

\emph{Positivity property \eqref{pos-prop}.}
To prove the desired positivity property \eqref{pos-prop} for the multiplier \eqref{mu2},
we consider the function $f$,
\begin{equation}
\label{f}
f(x):=\frac{31}{32} -\frac{13}{9}\cos x+\frac{25}{36}\cos(2x)-\frac{1}{9}\cos(3x), \quad x\in \Re.
\end{equation}
Now, elementary trigonometric identities lead to the following form of $f$
\[f(x)=-\frac{4}{9}\cos^3 x+\frac{25}{18}\cos^2x-\frac{10}{9}\cos x+\frac{79}{288}.\]
Hence, we consider the polynomial $p,$
\begin{equation}
\label{p}
p(x):=-\frac{4}{9}x^3+\frac{25}{18}x^2-\frac{10}{9}x+\frac{79}{288},\quad x\in [-1,1].
\end{equation}
It is easily seen that $p$ attains its minimum at $x^\star=(25-\sqrt{145})/24$ 
and
\[p(x^\star)=0.009321552602567 > 0.\]
Therefore, $f$ is indeed positive; in particular,
the desired positivity property \eqref{pos-prop} is satisfied. See also Figure \ref{Fig:f}.
\end{proof}

\begin{figure}[!ht]
\centering
\psset{yunit=0.7cm,xunit=0.7}
\begin{pspicture}(-3.9,-0.6)(4.6,4.05)
\psaxes[ticks=none,labels=none,linewidth=0.6pt]{->}(0,0)(-4,-0.6)(4.3,3.67)%
[$\!\!x$,0][$ $,90]
\psset{plotpoints=10000}
\psFourier[cosCoeff=1.9375 -1.44444444444 0.69444444444  -.1111111111111, sinCoeff=0,
linewidth=0.5pt,linecolor=blue]{-3.1415926}{3.1415926}
\uput[0](-0.44,3.88){\small $y$}
\uput[0](-2.37,1.88){\small $f$}
\uput[0](2.74,-0.22){\small $\pi$}
\uput[0](-3.9,-0.22){\small $-\pi$}
\pscircle*(0,1){0.038}
\pscircle*(0,2){0.038}
\pscircle*(0,3){0.038}
\uput[0](-0.62,1){\small $1$}
\uput[0](-0.62,2){\small $2$}
\uput[0](-0.62,3){\small $3$}
\uput[0](-0.78,-0.26){\small $O$}
\pscircle*(3.1415926,0){0.038}
\pscircle*(-3.1415926,0){0.038}
\end{pspicture}
\hspace*{0.9cm}
\psset{yunit=0.7cm,xunit=2.7}
\hspace*{-0.05cm}\begin{pspicture}(-1.18,-0.6)(1.40,4.05)
\psaxes[ticks=none,labels=none,linewidth=0.6pt]{->}(0,0)(-1.2,-0.6)(1.3,3.67)%
[$\!\!x$,0][$ $,90]
\psset{plotpoints=10000}
\psPolynomial[coeff=0.27430556 -1.11111111111 1.38888889 -0.444444444, linewidth=0.5pt,linecolor=blue]{-1}{1}
\uput[0](-0.77,1.08){\small $p$}
\uput[0](-0.17,3.88){\small $y$}
\uput[0](0.86,-0.27){\small $1$}
\uput[0](-1.2,-0.27){\small $-1$}
\pscircle*(0,1){0.038}
\pscircle*(0,2){0.038}
\pscircle*(0,3){0.038}
\uput[0](-0.22,1){\small $1$}
\uput[0](-0.22,2){\small $2$}
\uput[0](-0.22,3){\small $3$}
\uput[0](-0.28,-0.27){\small $O$}
\pscircle*(1,0){0.038}
\pscircle*(-1,0){0.038}
\end{pspicture}
\caption{The graphs of the function $f$ and the polynomial $p$ of \eqref{f} and \eqref{p}.}
\label{Fig:f}
\end{figure}
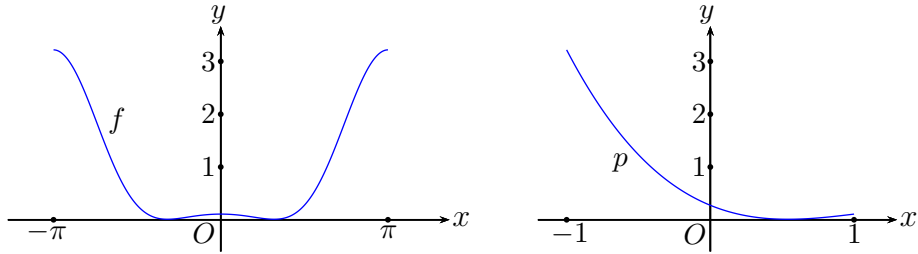

\subsection{On the conditions \eqref{pos-prop} and \eqref{NO-multiplier}}\label{SSe:discrepancy}
We briefly comment on the discrepancy between the conditions \eqref{pos-prop} and \eqref{NO-multiplier}.
Obviously, \eqref{NO-multiplier} implies \eqref{pos-prop}.

Let $S_q\subset \Re^q$ denote the region of the points $(\mu_1,\dotsc,\mu_q)$ satisfying the
positivity condition \eqref{pos-prop}. Since \eqref{NO-multiplier} and \eqref{pos-prop} are obviously equivalent
for $q$-tuples $(\mu_1,\dotsc,\mu_q)$ with only one nonvanishing component,
the intersection of $S_q$ with each coordinate axis is an interval of the form $(-1,1).$

Let us next focus on the instrumental case of the intersection of $S_q$ with the $\mu_1\mu_2$ plane, i.e.,
consider the set of points $(\mu_1,\dotsc,\mu_q)\in S_q$ with $\mu_3=\dotsb=\mu_q=0.$
Then, the positivity condition reads
\begin{equation}
\label{p-discrep1}
p(x):=1-\mu_1 x-\mu_2 (2x^2-1)>0,\quad x\in [-1,1].
\end{equation}
For $\mu_2=0,$ this condition is satisfied if and only if $|\mu_1|<1.$
For nonvanishing $\mu_2,$ the derivative of $p$ vanishes at
$x^\star=-\mu_1/(4\mu_2)$ and
\begin{equation}
\label{p-discrep2}
p(x^\star)=1+\mu_2 +\frac 18 \frac {\mu_1^2}  {\mu_2}.
\end{equation}
For positive $\mu_2$, this is a positive global maximum of $p.$
Therefore, in this case \eqref{p-discrep1} is satisfied if and only if $p(-1)$ and $p(1)$ are positive,
whence
\begin{equation}
\label{p-discrep3}
\mu_2 < 1-|\mu_1|.
\end{equation}
%
For negative $\mu_2$, the expression in  \eqref{p-discrep2} is a global minimum of $p.$
Now, we distinguish two subcases. It $|\mu_2|\leqslant |\mu_1|/4,$ then the minimum is attained
at a point $|x^\star|\geqslant 1,$ whence \eqref{p-discrep3} suffices for \eqref{p-discrep1}.
If, on the other hand, $|x^\star|< 1,$ then  \eqref{p-discrep1} is satisfied if and only
if the expression on the right-hand side of \eqref{p-discrep2} is positive, i.e.,
\[4\Big (\mu_2+\frac12 \Big )^2+\frac 12 \mu_1^2 <1;\]
that is,  $(\mu_1,\mu_2)$ lies in the interior of an ellipse.
Summarizing,  \eqref{p-discrep1}  is satisfied if and only if $(\mu_1,\mu_2)$ lie in the region
\[S:=\big \{(\mu_1,\mu_2):  -\frac {|\mu_1|}4\leqslant  \mu_2 < 1-|\mu_1|\big \}
\cup  \big \{(\mu_1,\mu_2):  4\Big (\mu_2+\frac12 \Big )^2+\frac 12 \mu_1^2 <1 \text{ and } |\mu_2|>\frac {|\mu_1|}4\big \}.\]
Notice that the lines $\mu_2=\pm(1- \mu_1)$ are tangent to the ellipse at their intersection points
with the lines  $\mu_2=\mp \mu_1/4$, respectively, i.e., at the points $(\pm 4/3,-1/3)$.
This is, of course, due to the fact that for these values the global minimum in \eqref{p-discrep2}
is attained at the points $x^\star=\pm 1.$ Therefore,  the intersection $S$ of $S_q$ with the
$\mu_1\mu_2$ plane is the union of two overlapping simple sets, a triangle and an ellipse,
\begin{equation}
\label{p-discrep4}
S=\big \{(\mu_1,\mu_2):  -\frac 13\leqslant  \mu_2 < 1-|\mu_1|\big \}
\cup  \big \{(\mu_1,\mu_2):  4\Big (\mu_2+\frac12 \Big )^2+\frac 12 \mu_1^2 <1 \big \};
\end{equation}
see Figure \ref{Fi:NO-posit}, right. Notice, in particular, that
\begin{equation}\label{eqn:mu1-mu2}
|\mu_1|<\sqrt{2} \quad \text{and}\quad |\mu_2|<1.
\end{equation}

Replacing $x$ by $x/2$ and by $x/3$, respectively, in the positivity condition \eqref{pos-prop},
it is obvious that the intersection of $S_q$ with the $\mu_2\mu_4$ plane, for $q\geqslant 4$,
and with the $\mu_3\mu_6$ plane, for $q=6$, respectively, is of the form \eqref{p-discrep4}
with $(\mu_1,\mu_2)$ replaced by $(\mu_2,\mu_4)$ and by $(\mu_3,\mu_6)$, respectively.

\begin{figure}[!ht]
\centering
\psset{yunit=1.7cm,xunit=1.7}
\begin{pspicture}(-1.6,-1.43)(1.98,1.63)
\psset{plotpoints=10000}
\pspolygon[linewidth=0.4pt,linecolor=black,fillstyle=solid,fillcolor=lightblue](1,0)(0,1)(-1,0)(0,-1)
\psaxes[ticks=none,labels=none,linewidth=0.6pt]{->}(0,0)(-1.6,-1.4)(1.7,1.5)%
[$\!\!\mu_1$,0][$ $,90]
\uput[0](-0.18,1.6){\small $\mu_2$}
\uput[0](-0.27,1.03){\small $1$}
\uput[0](-0.40,-1.12){\small $-1$}
\uput[0](0.9,0.12){\small $1$}
\uput[0](-1.42,0.12){\small $-1$}
\pscircle*(0,1){0.038}
\pscircle*(0,-1){0.038}
\pscircle*(1,0){0.038}
\pscircle*(-1,0){0.038}
\end{pspicture}
\quad
\begin{pspicture}(-1.9,-1.43)(2.03,1.63)
\psset{plotpoints=10000}
\psellipse[linewidth=0.4pt,linecolor=black,fillstyle=solid,fillcolor=lightblue](0,-0.5)(1.41421356237,0.5)
\pspolygon[linestyle=none,linecolor=black,fillstyle=solid,fillcolor=lightblue](0,1)(-1.3333333333,-0.3333333333)(1.3333333333,-0.3333333333)
\psaxes[ticks=none,labels=none,linewidth=0.6pt]{->}(0,0)(-1.9,-1.4)(1.9,1.5)%
[$\!\!\mu_1$,0][$ $,90]
\psline[linewidth=0.4pt](0,1)(1.3333333333,-0.3333333333)
\psline[linewidth=0.4pt](0,1)(-1.3333333333,-0.3333333333)
\pscircle*(0,1){0.038}
\pscircle*(0,-1){0.038}
\pscircle*(1,0){0.038}
\pscircle*(-1,0){0.038}
\uput[0](-0.18,1.6){\small $\mu_2$}
\uput[0](-0.27,1.03){\small $1$}
\uput[0](-0.40,-1.12){\small $-1$}
\uput[0](0.9,0.12){\small $1$}
\uput[0](-1.42,0.12){\small $-1$}
\uput[0](0.52,-0.35){\small $S$}
\end{pspicture}
\caption{Illustration of the conditions \eqref{NO-multiplier} and \eqref{pos-prop},
left and right, respectively, for $\mu_3=\dotsb=\mu_6=0$; cf.\ \eqref{p-discrep4}.}
\label{Fi:NO-posit}
\end{figure}
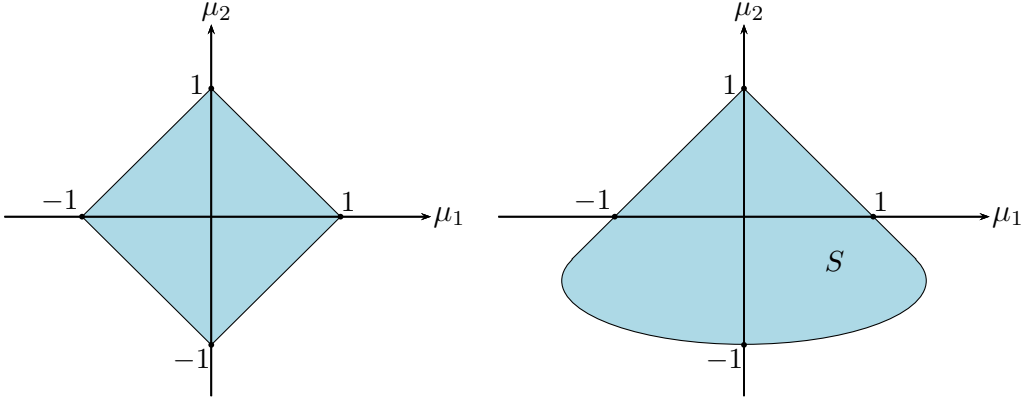

\subsection{On the construction of multipliers}\label{SSe:construction}
In this part, we describe some necessary conditions of multipliers satisfying
the A-stability condition \eqref{A} and the relaxed positive condition \eqref{pos-prop}.
To begin with, we show that no multiplier with $\mu_3=\dotsb=\mu_6=0$ exists.

\begin{proposition}\label{prop:no-2m}
There is no multiplier with $\mu_3=\dotsb=\mu_6=0$, satisfying  \eqref{A}
and \eqref{pos-prop}.
\end{proposition}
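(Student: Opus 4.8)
The plan is to turn the A-stability requirement \eqref{A} into an explicit polynomial inequality and to confront it with the description \eqref{p-discrep4} of the region cut out by \eqref{pos-prop}. For $\mu_3=\dots=\mu_6=0$ one has $\mu(\zeta)=\zeta^4(\zeta^2-\mu_1\zeta-\mu_2)$, and, exactly as in the proof of Proposition~\ref{Pr:mult-six}, the maximum principle reduces \eqref{A} to $\Real[\alpha(\e^{\i\varphi})\mu(\e^{-\i\varphi})]\geqslant0$ for all $\varphi$. First I would substitute $x=\cos\varphi$ and factor out the term $1-x$ that is forced by $\alpha(1)=0$, bringing the inequality into the shape
\[
60\,\Real\big[\alpha(\e^{\i\varphi})\mu(\e^{-\i\varphi})\big]=(1-x)\,P(x),\qquad x=\cos\varphi ,
\]
where $P$ is an explicit quintic that is \emph{affine} in $(\mu_1,\mu_2)$. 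Hence \eqref{A} is equivalent to $P(x)\geqslant0$ on $[-1,1]$, and, crucially, every value of $x$ supplies one \emph{linear} constraint on $(\mu_1,\mu_2)$.

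The clean landmark values are $P(-1)=832(1+\mu_1-\mu_2)$, $P(0)=-88+32\mu_1-88\mu_2$ and $P(1)=120(\mu_1+2\mu_2)$. I argue by contradiction, assuming both \eqref{A} and \eqref{pos-prop}; by \eqref{p-discrep4} the pair $(\mu_1,\mu_2)$ then lies in $S=T\cup E$, and $|\mu_1|<\sqrt2$, $|\mu_2|<1$ by \eqref{eqn:mu1-mu2}. From $P(0)\geqslant0$ I read off $\mu_1\geqslant\frac{11}{4}(1+\mu_2)$; on the triangle $T$, where $\mu_2\geqslant-\frac13$, this forces $\mu_1\geqslant\frac{11}{6}>\sqrt2$, contradicting $|\mu_1|<\sqrt2$. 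Hence the pair must lie in the ellipse $E$, that is $\mu_2\in(-1,0)$ together with $0<\mu_1<\sqrt{-8\mu_2(1+\mu_2)}=:h(\mu_2)$.

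To rule out $E$ I would use two further cuts, played against this ceiling $h(\mu_2)$. From $P(1)\geqslant0$ one gets $\mu_1\geqslant-2\mu_2$, and squaring shows $-2\mu_2\geqslant h(\mu_2)$ exactly when $\mu_2\leqslant-\frac23$; from $P(-\tfrac15)\geqslant0$ (the coefficient of $\mu_1$ there being positive) one gets $\mu_1\geqslant\ell(\mu_2)$ for an explicit increasing affine $\ell$, and a second squaring gives $\ell(\mu_2)>h(\mu_2)$ throughout $[-\tfrac23,0)$. Since the subranges $\mu_2\leqslant-\tfrac23$ and $\mu_2\geqslant-\tfrac23$ exhaust $(-1,0)$, in either case the lower bound on $\mu_1$ forced by \eqref{A} reaches the ceiling $h(\mu_2)$ imposed by $E$, which is impossible. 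This finishes the proof.

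The step I expect to be the main obstacle is the passage from \eqref{A} to a finite, checkable set of linear cuts, because the binding point of $P(x)\geqslant0$ \emph{migrates} across $[-1,1]$ as $\mu_2$ varies: it sits at the endpoint $x=1$ when $\mu_2$ is near $-1$ and moves to negative $x$ as $\mu_2$ climbs toward $-\tfrac12$. Consequently no single evaluation of $P$ can separate the A-stability region from $S$, and the Nevanlinna--Odeh-type bound \eqref{multiplier-lower-b}, which controls only $|\mu_1|+|\mu_2|$, is far too weak here (it is compatible with $S$). The real content is to locate the right nodes — the clean endpoints $x\in\{0,1\}$ together with one interior node such as $x=-\tfrac15$ — and to notice that for the ellipse part the correct ceiling to beat is $h(\mu_2)$ rather than merely $\sqrt2$; once the nodes are fixed, the verification is an elementary comparison of quadratics in $\mu_2$.
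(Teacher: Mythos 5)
Your proof is correct, and while it shares the paper's overall strategy---reduce \eqref{A} to $P(x)\geqslant 0$ on $[-1,1]$, extract linear cuts in $(\mu_1,\mu_2)$ from point evaluations of $P$, and confront them with the decomposition \eqref{p-discrep4}---your choice of cuts and the way you close the argument are genuinely different. Your landmark values check out up to a harmless normalization: your $P$ is $4$ times the $P$ of \eqref{multiplier-4} (you absorbed the factor $4$ of \eqref{Real5}); in the paper's normalization, $P(-1)=208(1+\mu_1-\mu_2)$, $P(0)=2(4\mu_1-11\mu_2-11)$, $P(1)=30(\mu_1+2\mu_2)$, and $P(-1/5)=(-136080+27300\mu_1-147000\mu_2)/3125$, so $\ell(\mu_2)=(324+350\mu_2)/65$ with positive $\mu_1$-coefficient, as you need. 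Both comparisons against the ceiling $h(\mu_2)=\sqrt{-8\mu_2(1+\mu_2)}$ verify exactly: $-2\mu_2\geqslant h(\mu_2)$ on $(-1,0)$ iff $4\mu_2(3\mu_2+2)\geqslant 0$, i.e.\ iff $\mu_2\leqslant -2/3$; and $\ell^2-h^2$ has numerator $156300\mu_2^2+260600\mu_2+104976$, which is convex with vertex near $\mu_2\approx -0.834$ and equals $6384/9>0$ at $\mu_2=-2/3$, hence is positive on $[-2/3,0)$, and since $\ell(-2/3)=272/195>0$ and $\ell$ is increasing, indeed $\ell>h$ there; your triangle case ($\mu_1\geqslant \tfrac{11}{4}(1+\mu_2)\geqslant \tfrac{11}{6}>\sqrt2$ against \eqref{eqn:mu1-mu2}) is also sound. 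The paper instead uses only two cuts, at $x=-4/25$ and $x=0.99$: with $|\mu_1|<\sqrt 2$ the first gives $\mu_2<-0.672343\ldots$, which already forces the point into the ellipse, and the second, intersected with the ellipse at the points \eqref{multiplier-AB}, gives $\mu_2\geqslant -0.671118\ldots$, a contradiction whose margin is about $1.2\cdot 10^{-3}$ and whose verification rests on high-precision decimals at a node very close to the right endpoint. Your variant pays with one extra node and the case split at $\mu_2=-2/3$, but buys exact rational arithmetic with comfortable margins (at the tightest spot, $\ell(-2/3)-h(-2/3)=272/195-4/3\approx 0.06$) and an explicit closed-form use of the ellipse bound in place of the paper's numerically computed chord $AB$; the paper's version is shorter but more delicate to certify.
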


\begin{proof}
The positivity condition  \eqref{pos-prop} is satisfied if and only if
\begin{equation}
\label{multiplier-2}
1-\mu_1x-\mu_2(2x^2-1)>0\quad \forall x\in [-1,1];
\end{equation}
see \eqref{p-discrep1}.
The A-stability condition \eqref{A} is in this case equivalent to
\eqref{Real5} with
\begin{equation}
\label{multiplier-4}
\begin{aligned}
P(x)
&=(8+59x-157x^2)\big (4x^3-\mu_1(2x^2-1)-3x-\mu_2x\big )\\
&\quad+(1+x)(137x^2-144x+22)(4x^2-2\mu_1x-\mu_2-1).
\end{aligned}
\end{equation}
%


First,  the estimate $|\mu_1|<\sqrt{2}$ in \eqref{eqn:mu1-mu2} and  the nonnegativity of
\[P(- 4/{25})=-41.65312\mu_2+7.86979\mu_1-39.13478\]
lead to
\begin{equation}
\label{multiplier-4n}
\mu_2<
\frac {7.86979\sqrt{2}-39.13478}{41.65312}=-0.672343782385853.
\end{equation}
%

On the other hand, for $\mu_2<-0.672343782385853,$ we have $|\mu_2|>|\mu_1|/4,$ and thus $(\mu_1,\mu_2)$ must lie
in the interior of the ellipse in \eqref{p-discrep4}. Now, $P(0.99)=a\mu_2+b\mu_1+c$ with
\[a=\frac {2086460708677967}{35184372088832},\quad b=\frac{1053766469372221}{35184372088832},\quad c=\frac{9685378027}{109951162777600},\]
and the intersection points of the line $P(0.99)=0$ and the ellipse $4 (\mu_2+1/2 )^2+ \mu_1^2/2 =1$ are
\begin{equation}
\label{multiplier-AB}
\left\{\begin{aligned}
&A=(2.941186035762484\cdot 10^{-6}, -1.08131109678632\cdot 10^{-12}),\\
&B=(1.328818676149621, -0.671118740185537).
\end{aligned}
\right.
\end{equation}
It is easily seen that  $P(0.99)$ is nonnegative only in the part of the interior of the ellipse
to the right of the segment $AB;$ cf.\ Figure \ref{Fi:mult-2}.
Therefore, $P(0.99)\geqslant 0$ implies
\[\mu_2\geqslant -0.671118740185537.\]
This together with  \eqref{multiplier-4n} leads to a contradiction;
hence, no multiplier of
the form $(\mu_1,\mu_2,0,$ $\dotsc,0)$ exists for the six-step BDF method.
\end{proof}


\begin{figure}[!ht]
\centering
\psset{yunit=1.7cm,xunit=1.7}
\begin{pspicture}(-2,-1.5)(2,0.6)
\pscustom[linewidth=0.01pt,fillstyle=solid,fillcolor=lightblue]{
\pscurve[linewidth=0.01pt](1.328818676149621,-0.671118740185535)(1.41421356237,-0.5)
(1.3333333333,-0.3333333333)(1.1,-0.185754872750587)(1,-0.146446609406726)
(0.9,-0.114318784486462)(0.6,-0.047230743093129)(0.3,-0.011379492857698)
(0.000002941186035762484,-0.000000000001081311309678635)
\psline[linewidth=0.0pt](0.000002941186035762484,-0.000000000001081311309678635)(1.328818676149621,-0.671118740185535)}
\psaxes[ticks=none,labels=none,linewidth=0.6pt]{->}(0,0)(-1.9,-1.4)(1.9,0.5)%
[$\!\!\mu_1$,0][$ $,90]
\psset{plotpoints=10000}
\psellipse[linewidth=0.4pt,linecolor=black,fillcolor=lightblue](0,-0.5)(1.41421356237,0.5)
\pscircle*(0,-1){0.038}
\pscircle*(0,-0.5){0.038}
\pscircle*(1,0){0.038}
\pscircle*(-1,0){0.038}
\pscircle*(1.328818676149621,-0.671118740185535){0.038}
\pscircle*(0.000002941186035762484,-0.000000000001081311309678635){0.038}
\psline[linewidth=0.4pt](0.000002941186035762484,-0.000000000001081311309678635)(1.328818676149621,-0.671118740185535)
\uput[0](0.83,0.12){\small $1$}
\uput[0](-1.35,0.12){\small $-1$}
\uput[0](-0.435,-0.50){\small $-\frac 12$}
\uput[0](-0.40,-1.12){\small $-1$}
\uput[0](1.22,-0.78){\small $B$}
\uput[0](-0.3,0.13){\small $O$}
\uput[0](-0.05,0.13){\small $A$}
\uput[0](-0.18,0.6){\small $\mu_2$}
\end{pspicture}
\caption{Out of the interior points $(\mu_1,\mu_2)$ of the ellipse, $P(0.99)$, see \eqref{multiplier-4}, is nonnegative only
in the blue region; in the blue region, $\mu_2\geqslant -0.671118740185535$. The points $A$ and $B$
are given in \eqref{multiplier-AB}. The discrepancy between $A$ and $O=(0,0)$ is invisible.}
\label{Fi:mult-2}
\end{figure}
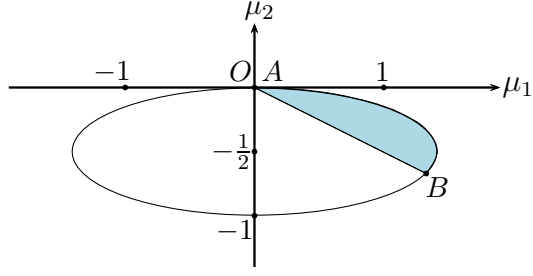

Our next attempt was to seek a multiplier with $\mu_4=\mu_5=\mu_6=0$.
In this case, the A-stability condition \eqref{A} and the positivity condition  \eqref{pos-prop}
lead, respectively, to the conditions
\begin{equation}
\label{multiplier-7}
\begin{aligned}
P(x)
&=(8+59x-157x^2)\big (4x^3-\mu_1(2x^2-1)-3x-\mu_2x-\mu_3\big )\\
&\quad+(1+x)(137x^2-144x+22)(4x^2-2\mu_1x-\mu_2-1)\geqslant 0 \quad \forall x\in [-1,1]
\end{aligned}
\end{equation}
and
\begin{equation}
\label{multiplier-8}
f(x):=1-\mu_1\cos x-\mu_2\cos (2x)-\mu_3\cos (3x) >0 \quad \forall x \in \Re.
\end{equation}

Necessary conditions for \eqref{multiplier-7} and \eqref{multiplier-8} could be derived
by evaluating $P$ and $f$ at certain points.
For instance, we claim the following necessary condition,
which helps us to construct multipliers.

\begin{proposition}\label{prop:nec-cond}
If $(\mu_1,\mu_2,\mu_3,0,0,0)$ is a multiplier of the six-step BDF method, then there holds
\begin{equation*}
0.41990729<\mu_1<\sqrt{3}, \  -1<\mu_2< -0.58852878,\ 0<\mu_3<1, \ |\mu_1|+|\mu_2|+|\mu_3|>1.
\end{equation*}
\end{proposition}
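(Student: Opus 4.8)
Being a multiplier of the form $(\mu_1,\mu_2,\mu_3,0,0,0)$ means satisfying simultaneously the A-stability condition, which here takes the form $P(x)\geqslant 0$ on $[-1,1]$ with $P$ as in \eqref{multiplier-7}, and the positivity condition \eqref{multiplier-8}, $f(x)=1-\mu_1\cos x-\mu_2\cos(2x)-\mu_3\cos(3x)>0$ for all $x$. The plan is to extract each of the asserted inequalities as a \emph{necessary} consequence by evaluating $f$ and $P$ at judiciously chosen points: each such evaluation yields a linear inequality in $(\mu_1,\mu_2,\mu_3)$, and the bounds will follow by combining these with one another and with the already-established facts about the positivity region.

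First I would dispose of the two bounds that come from a single evaluation of $f$. Taking $x=\pi/2$ in \eqref{multiplier-8} gives $f(\pi/2)=1+\mu_2>0$, that is, $\mu_2>-1$. Taking $x=\pi/6$, where $\cos(\pi/6)=\tfrac{\sqrt3}{2}$, $\cos(\pi/3)=\tfrac12$, $\cos(\pi/2)=0$, gives $f(\pi/6)=1-\tfrac{\sqrt3}{2}\mu_1-\tfrac12\mu_2>0$, i.e.\ $\sqrt3\,\mu_1+\mu_2<2$. Combining this with $\mu_2>-1$ yields $\sqrt3\,\mu_1<2-\mu_2<3$, whence $\mu_1<\sqrt3$. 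These two estimates are clean and require no input from the A-stability side.

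The remaining bounds, namely $0<\mu_3<1$, the sign and sharp lower bound $\mu_1>0.41990729$, and the sharp upper bound $\mu_2<-0.58852878$, are where the real work lies, and I expect this to be the main obstacle. Here no single evaluation suffices; one must feed the A-stability polynomial into the picture. Following the method of Proposition~\ref{prop:no-2m}, I would evaluate $P$ at the same diagnostic abscissae, $x=-4/25$ and $x=0.99$, together with the endpoints and $x=0$ (where $P(1)=30(\mu_1+2\mu_2+3\mu_3)$, $P(-1)=208(1+\mu_1-\mu_2+\mu_3)$, and $P(0)=8\mu_1-8\mu_3-22\mu_2-22$), obtaining a family of half-planes in $(\mu_1,\mu_2,\mu_3)$ that must all contain the multiplier. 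Intersecting these A-stability half-planes with the positivity region --- described for the $(\mu_1,\mu_2)$ coordinates by the triangle-plus-ellipse set \eqref{p-discrep4} and \eqref{eqn:mu1-mu2}, now perturbed by the $\mu_3$-terms --- pins down the extreme admissible coordinates. As in Proposition~\ref{prop:no-2m}, the sharp numerical cutoffs $0.41990729$ and $-0.58852878$ are located at the intersection of a binding A-stability line with the boundary curve of the positivity region (the analogue of the points $A,B$ there), and I anticipate that reaching them requires an iterative sharpening --- bound $\mu_2$ from above, use it to bound $\mu_1$ from below, re-insert to improve $\mu_2$ and $\mu_3$ --- rather than a one-shot estimate. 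The sign $\mu_3>0$ and the bound $\mu_3<1$ should drop out of the same system once $\mu_1$ and $\mu_2$ are trapped.

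Finally, the inequality $|\mu_1|+|\mu_2|+|\mu_3|>1$ is an immediate corollary and needs no further evaluation. Indeed, the previous step establishes the signs $\mu_1>0$, $\mu_2<0$, $\mu_3>0$, so that
\[
|\mu_1|+|\mu_2|+|\mu_3|=\mu_1-\mu_2+\mu_3>0.41990729+0.58852878+0>1,
\]
the lower bound on $\mu_1$ and the lower bound on $-\mu_2$ already summing to more than $1$. This is the punchline of the proposition: it shows that no multiplier of the form $(\mu_1,\mu_2,\mu_3,0,0,0)$ can satisfy the Nevanlinna--Odeh condition \eqref{NO-multiplier}.
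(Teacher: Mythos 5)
Your perimeter estimates are sound and match the paper: $\mu_2>-1$ from $f(\pi/2)=1+\mu_2>0$, and $\mu_1<\sqrt3$ from $f(\pi/6)>0$ combined with $\mu_2>-1$ (the paper also uses $f(5\pi/6)$ to get $|\mu_1|<\sqrt3$, but for the stated conclusion your one-sided version suffices). Your closing step is also correct and is in fact a nice, self-contained alternative to the paper: once the signs and the sharp constants are known, $|\mu_1|+|\mu_2|+|\mu_3|\geqslant\mu_1-\mu_2>0.41990729+0.58852878=1.00843\ldots>1$, whereas the paper instead invokes the general computation of Remark~\ref{rem:mult-general} (evaluating $P$ at $x=3/40$), which proves the stronger statement that no Nevanlinna--Odeh multiplier exists at all.

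However, the heart of the proposition --- $0<\mu_3<1$, $\mu_2<-0.58852878$, $\mu_1>0.41990729$ --- is left as a program (``iterative sharpening'', ``should drop out of the same system''), and the specific evaluation points you propose ($x=0,\pm1,-4/25,0.99$) demonstrably cannot produce the stated constants. Concretely: in \eqref{multiplier-7} the coefficient of $\mu_3$ is $-(8+59x-157x^2)$, which at your point $x=-4/25$ is about $+5.46$; so as long as you only know $|\mu_3|<1$, nonnegativity of $P(-4/25)$ yields roughly $\mu_2<(7.86979\sqrt3+5.4592-39.13478)/41.65312\approx-0.48$, far weaker than $-0.58852878$. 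The paper instead evaluates at $x=-66/625$, chosen precisely so that the $\mu_3$-coefficient is negligible ($-0.01883648$) and $\mu_3>0$ then helps rather than hurts --- which requires having first proved $\mu_3>0$. That sign, in turn, comes in the paper from a contradiction chain your plan has no counterpart for: if $\mu_3\leqslant0$, then $f(\pi/4)>0$ forces $\mu_1<\sqrt2$; $P(-4/25)\geqslant0$ then gives $\mu_2<-0.672$; $P(0.999)\geqslant0$ gives $\mu_1>1.3426$; and then $2f(\pi/3)=-\mu_1+\mu_2+2\mu_3+2<0$ violates \eqref{multiplier-8}. Likewise $\mu_1>0$ needs $P(0.8)$ combined with $\mu_3<\mu_1$ (from $P(0)\geqslant0$), the sharp bound $\mu_1>0.41990729$ comes from $P(27/125)$, and $\mu_3<1$ does not come from the $P$-system at all but from the $f$-identities $2f(2\pi/3)+f(0)=3(1-\mu_3)$ and $2f(\pi/3)+f(\pi)=3(1+\mu_3)$. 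Your guess about the mechanism is also off: the cutoffs are not located at an intersection of a binding line with the ellipse boundary as in Proposition~\ref{prop:no-2m}; they arise by inserting the crude bounds $\mu_1<\sqrt3$, $\mu_3>0$ into single, carefully chosen linear evaluations of $P$. This matters because your punchline consumes essentially the entire margin ($0.41990729+0.58852878$ exceeds $1$ by less than $0.009$), so any weakening of the constants --- inevitable with your point set --- breaks the final inequality; the missing steps are therefore not cosmetic but the substance of the proof.
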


\begin{proof}
First, $|\mu_2|<1$ follows immediately from the positivity of
$f(\pi/2)$ and of $f(0)$ and $f(\pi).$  Furthermore,
\[2f(2\pi/3)+f(0)=3(1-\mu_3)\quad\text{and}\quad 2f(\pi/3)+f(\pi)=3(1+\mu_3),\]
whence $|\mu_3| <1.$ In view of
\[f(\pi/6)=\frac{1}{2}\big(-\sqrt{3}\mu_1-\mu_2+2\big)\quad\text{and}\quad f(5\pi/6)=\frac{1}{2}\big(\sqrt{3}\mu_1-\mu_2+2\big),\]
we have $\sqrt{3}|\mu_1|<2-\mu_2,$ and, in combination with $\mu_2>-1,$ infer that
$|\mu_1|<\sqrt{3}.$

Up to this point, we did not use the nonnegativity of $P$.  Now we check $P(0)\geqslant 0$, i.e.,
\begin{equation}
\label{multiplier-10}
P(0)=2\big [4(\mu_1-\mu_3)-11(1+\mu_2)\big ] \geqslant  0.
\end{equation}
Since $1+\mu_2>0,$ we infer that $\mu_3<\mu_1$. Furthermore, since $\mu_1<\sqrt{3}$ and  $|\mu_3| <1,$
\[11\mu_2< 4\big (\sqrt{3}+1\big ) -11<-0.07179,\quad\text{whence}\quad \mu_2<-0.65263636\cdot 10^{-2}.\]
Meanwhile, since $274/625+1154\mu_2/25<0,$ the nonegativity of
\begin{equation}
P(0.8)=\frac {274}{625}+\frac {1154}{25}\mu_2+\frac {3572}{125}\mu_1+\frac {1132}{25}\mu_3
\end{equation}
yields $3572 \mu_1/125+1132\mu_3/25> 0$, which together with $\mu_3<\mu_1$
 leads to
\[\frac {3572}{125}\mu_1+\frac {1132}{25}\mu_1>\frac {3572}{125}\mu_1+\frac {1132}{25}\mu_3> 0,\]
i.e., $\mu_1>0.$ Therefore, we arrive at
\begin{equation*}
0<\mu_1<\sqrt{3},\quad -1<\mu_2<-0.65263636\cdot 10^{-2}\quad\text{and}\quad0<|\mu_3|<1.
\end{equation*}

Next, we prove $\mu_3>0$ by contradiction. If  $\mu_3\leqslant 0$, then the positivity of $f(\pi/4)$ yields
\[ f(\pi/4)=1- \frac{\sqrt{2}}{2  } (\mu_1-\mu_3)>0
 \implies\mu_1<\sqrt{2}.\]
This and the nonnegativity of $P(-4/25)$ imply $\mu_2<-0.672$. Then, we can derive a lower bound $\mu_1>1.3426$
by examining $P(0.999)\geqslant  0$.
However, with $\mu_1>1.3426$, $\mu_2<-0.672$ and $\mu_3\leqslant 0$, it is easy to observe that
\begin{equation*}
 2f(\pi/3)=-\mu_1+\mu_2+2\mu_3+2\leqslant - 1.3426-0.672+2<   -0.0146,
\end{equation*}
which violates the positive condition \eqref{multiplier-8}. Therefore, we conclude that $\mu_3>0$.

Moreover, from $\mu_1<\sqrt3, \mu_3>0$ and   the nonnegativity of
\[P(- 66/625)=7.33518936\mu_1 -34.64182239\mu_2  -0.01883648\mu_3-33.09263039,\]
we infer that
\begin{equation*}
\label{multiplier-4n}
\mu_2<
\frac {7.33518936\sqrt{3}-33.09263039}{34.64182239}<-0.58852878.
\end{equation*}
Then, the  nonnegativity of $P(27/125)$ yields $ \mu_1> 0.41990729$. Thus, we arrive at
\begin{equation*}
0.41990729<\mu_1<\sqrt{3},\quad-1<\mu_2<-0.58852878\quad \text{and}\quad 0<\mu_3<1.
\end{equation*}

Finally, the property $|\mu_1|+|\mu_2|+|\mu_3|>1$ is a special case
of the more general result of the next Remark.
\end{proof}


\begin{remark}[Nonexistence of Nevanlinna--Odeh multipliers for the six-step BDF method]\label{rem:mult-general}
The multiplier \eqref{mu2} is not unique. In general, the A-stability condition \eqref{A}
and the positivity condition  \eqref{pos-prop}  lead to the conditions
\begin{equation*}
\begin{aligned}
  P(x)
  ={}&(-80x^5+208x^4-122x^3-82x^2+98x-22)+(40x^4-104x^3+71x^2+15x+8)\mu_1 \\
  &+(20x^3-52x^2+114x-22)\mu_2-(8+59x-157x^2)\mu_3\\
    & +(294x^3-66x^2-130x+22)\mu_4+(588x^4-132x^3-417x^2+103x+8)\mu_5\\
    &+(1176x^5-264x^4-1128x^3+272x^2+146x-22)\mu_6\geqslant 0
\end{aligned}
\end{equation*}
and
\begin{equation*}
\begin{aligned}
p(x)
={}&1-x\mu_1-(2x^2-1)\mu_2-(4x^3-3x)\mu_3-(8x^4-8x^2+1)\mu_4\\
&-(  16x^5-20x^3+5x     )\mu_5-(   32x^6-48x^4+18x^2-1   )\mu_6>0,
\end{aligned}
\end{equation*}
respectively, for all $x\in [-1,1]$. In Table \ref{Ta:mult-four},
we list several multipliers satisfying these conditions. 

Furthermore, evaluating $P$ at $x=3/40,$ we have
\[P\big (3/40\big )< -15.1563+13.7341 \sum_{i=1}^6|\mu_i|.\]
Assuming $|\mu_1|+\dotsb +|\mu_6|\leqslant 1,$ we observe that
\[P\big (3/40\big )< -1.4222 < 0,\]
and infer that no Nevanlinna--Odeh multiplier exists for the six-step BDF method.
\end{remark}

\begin{table}[!ht]
\begin{center}
\caption{Multipliers for the six-step BDF method;  
see also \eqref{mu2}.}
\begin{tabular}{|c|c|c|c|c|c|}
\hline
$\mu_1$ & $\mu_2$ & $\mu_3$  & $\mu_4$  & $\mu_5$ & $\mu_6$\\
\hhline{|======|}
$1.6$ & $-0.92$ & $0.3$   & $0$ & $0$ & $0$ \\
\hhline{|------|}
$0.8235$ & $-0.855$ & $0.38$  & $0$ & $0$ & $0$  \\
\hhline{|------|}
$1.67$ & $-1$ & $0.4$  & $-0.1$ & $0$ & $0$  \\
\hhline{|------|}
$0.8$ & $-0.7$ & $0.2$  & $0.1$ & $0$ & $0$  \\
\hhline{|------|}
$1.118$ & $-1$ & $0.6$  & $-0.2$ & $0.2$ & $0$  \\
\hhline{|------|}
$0.6708$ & $-0.2$ & $-0.2$  & $0.6$ & $-0.2$ & $0$  \\
\hhline{|------|}
$0.735$ & $-0.2$ & $-0.4$  & $0.8$ & $-0.4$ & $0.2$  \\
\hhline{|------|}
\end{tabular}
\label{Ta:mult-four}
\end{center}
\end{table}

\section{Stability}\label{Se:stab}
In this section we prove stability of the six-step BDF method \eqref{ab} by the energy
technique. The result is well known; the novelty is in the simplicity of the proof,
the  main advantage of the energy technique.
Proofs by other stability techniques are significantly more involved.
For a proof by a spectral technique in the case of selfadjoint operators, we refer to \cite[chapter 10]{T};
for a proof in the general case, under a sharp condition on the nonselfadjointness of the operator
as well as for nonlinear parabolic equations,
by a combination of spectral and Fourier techniques, see, e.g., \cite{A3} and references therein.
For a long-time estimate in the case of selfadjoint operators and an application to the Stokes--Darcy problem, see \cite{LiWangZhou:2020}.

For simplicity, we denote by $\langle\cdot,\cdot\rangle$ the inner product on $V,$ $\langle v, w\rangle:=(A^{1/2}v, A^{1/2}w).$

Before we proceed, for the reader's convenience, we recall the notion of the generating function of
an $n\times n$ Toeplitz  matrix $T_n$ as well as an auxiliary result, the Grenander--Szeg\"o theorem.

\begin{definition}[{\cite[p.\ 13]{Chan:07}}; the generating function of a Toeplitz matrix]\label{De:gen-funct}
Consider the $n \times n$ Toeplitz  matrix  $T_n=(t_{ij})\in \C^{n,n}$ with diagonal entries $t_0,$ subdiagonal entries
$t_1,$ superdiagonal entries $t_{-1},$ and so on, and $(n,1)$ and $(1,n)$ entries
$t_{n-1}$ and   $t_{1-n}$, respectively, i.e., the entries $t_{ij}=t_{i-j}, i,j=1,\dotsc,n,$ are constant along the diagonals of $T_n.$
 Let   $t_{-n+1},\dotsc, t_{n-1}$ be the Fourier coefficients of the trigonometric polynomial $f$, i.e.,
\begin{equation*}
  t_k=\frac{1}{2\pi}\int_{-\pi}^{\pi}f(x)\e^{-\i kx}\, \mathrm{d} x,\quad k=1-n,\dotsc,n-1.
\end{equation*}
Then, $f, f(x)=\sum_{k=1-n}^{n-1} t_k\e^{\i kx},$ is called  \emph{generating function} of $T_n$.
\end{definition}

If the generating function $f$ is real-valued, then the matrix $T_n$ is Hermitian; if $f$ is real-valued and even, then  $T_n$ is symmetric.

Notice, in particular, that the generating function of a symmetric band Toeplitz matrix of bandwidth $2m+1,$
i.e., with $t_{m+1}=\dotsb=t_{n-1}=0,$ is a real-valued, even trigonometric polynomial, $f(x)= t_0+2t_1 \cos x+\dotsb+2t_m \cos (mx),$
for all $n\geqslant m+1.$

\begin{lemma}[{\cite[pp.\ 13--14]{Chan:07}}; the Grenander-Szeg\"{o} theorem]\label{Le:GS}
Let $T_n$ be a symmetric Toeplitz  matrix as in Definition \ref{De:gen-funct} with generating function $f$.
Then, the smallest and largest eigenvalues  $\lambda_{\min}(T_n)$ and $\lambda_{\max}(T_n)$, respectively, of $T_n$
are bounded as follows
\begin{equation*}
  f_{\min} \leqslant \lambda_{\min}(T_n) \leqslant \lambda_{\max}(T_n) \leqslant f_{\max},
\end{equation*}
with $f_{\min}$ and  $f_{\max}$  the minimum and maximum of $f$, respectively.
In particular, if  $f_{\min}$ is positive, then the symmetric matrix $T_n$ is positive
definite.\footnote{For real-valued $f$ and $z=(z_0,\dotsc,z_{n-1})^\top\in \C^n,$ we have
$(T_nz,z)=\frac{1}{2\pi}\displaystyle{\int_{-\pi}^{\pi}f(x)\Big |\sum_{k=0}^{n-1} z_k\e^{\i kx}\Big |^2\, \mathrm{d} x}$
and $(z,z)=\frac{1}{2\pi}\displaystyle{\int_{-\pi}^{\pi}\Big |\sum_{k=0}^{n-1} z_k\e^{\i kx}\Big |^2\, \mathrm{d} x}$,
and the result is evident.}
\end{lemma}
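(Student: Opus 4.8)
The plan is to reduce the eigenvalue bounds to the quadratic-form identity recorded in the footnote of Lemma \ref{Le:GS} and then to invoke the variational (Rayleigh quotient) characterization of the extreme eigenvalues of a Hermitian matrix; the positive definiteness will be an immediate byproduct.

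First I would derive the integral representation of the quadratic form. Writing $T_n=(t_{r-s})_{r,s=0}^{n-1}$ and inserting the Fourier coefficient formula $t_k=\frac 1{2\pi}\int_{-\pi}^{\pi}f(x)\e^{-\i kx}\,\mathrm{d}x$, I would compute, for arbitrary $z=(z_0,\dotsc,z_{n-1})^\top\in\C^n$,
\[
(T_nz,z)=\sum_{r,s=0}^{n-1}t_{r-s}z_s\bar z_r
=\frac 1{2\pi}\int_{-\pi}^{\pi} f(x)\Big|\sum_{k=0}^{n-1}z_k\e^{\i kx}\Big|^2\,\mathrm{d}x .
\]
The only nonroutine point is that, after substituting the Fourier formula and using $\e^{-\i(r-s)x}=\e^{-\i rx}\e^{\i sx}$, the double sum factors as $\big(\sum_s z_s\e^{\i sx}\big)\overline{\big(\sum_r z_r\e^{\i rx}\big)}$, i.e., as the squared modulus of the trigonometric polynomial $\sum_k z_k\e^{\i kx}$. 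Specializing to $f\equiv 1$ (so that $t_0=1$ and all other $t_k$ vanish), the orthogonality of the exponentials on $[-\pi,\pi]$ gives the companion identity $(z,z)=\frac 1{2\pi}\int_{-\pi}^{\pi}\big|\sum_k z_k\e^{\i kx}\big|^2\,\mathrm{d}x$.

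Next, since the weight $\big|\sum_k z_k\e^{\i kx}\big|^2$ is nonnegative, the pointwise bounds $f_{\min}\leqslant f(x)\leqslant f_{\max}$ integrate to
\[
f_{\min}(z,z)\leqslant (T_nz,z)\leqslant f_{\max}(z,z)\qquad\text{for all }z\in\C^n .
\]
Finally I would invoke the Rayleigh quotient characterization: $T_n$ is symmetric, hence Hermitian, so its eigenvalues are real and $\lambda_{\min}(T_n)=\min_{z\neq 0}(T_nz,z)/(z,z)$, $\lambda_{\max}(T_n)=\max_{z\neq 0}(T_nz,z)/(z,z)$. Dividing the two-sided bound by $(z,z)>0$ and passing to the infimum and supremum over $z\neq 0$ yields $f_{\min}\leqslant\lambda_{\min}(T_n)\leqslant\lambda_{\max}(T_n)\leqslant f_{\max}$. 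For the last assertion, if $f_{\min}>0$ then $(T_nz,z)\geqslant f_{\min}(z,z)>0$ for every $z\neq 0$, so $T_n$ is positive definite.

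The main (and essentially the only) obstacle is the factorization in the first display; once the quadratic form is written as the Fourier-weighted integral, everything else is the standard min--max argument.
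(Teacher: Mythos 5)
Your proposal is correct and follows exactly the argument the paper itself records in the footnote to Lemma \ref{Le:GS}: the quadratic-form identity $(T_nz,z)=\frac{1}{2\pi}\int_{-\pi}^{\pi}f(x)\big|\sum_{k=0}^{n-1}z_k\e^{\i kx}\big|^2\,\mathrm{d}x$ together with the companion identity for $(z,z)$, followed by the Rayleigh quotient characterization of the extreme eigenvalues. You have merely filled in the routine details (the factorization of the double sum and integrating the pointwise bounds $f_{\min}\leqslant f(x)\leqslant f_{\max}$ against the nonnegative weight) that the paper dismisses as ``evident.''
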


\begin{theorem}[Stability of the six-step BDF method]\label{Th:stab}
The six-step BDF method \eqref{ab} is stable in the sense that 
\begin{equation}
\label{stab-abg2}
|u^n|^2 + \tau \sum_{\ell=6}^n \|u^\ell\|^2
\leqslant C\sum_{j=0}^5 \big (|u^j|^2+\tau \|u^j\|^2\big ),\quad n=6,\dotsc,N,
\end{equation}
%
with a  constant $C$  independent of $\tau$ and $n$.
\end{theorem}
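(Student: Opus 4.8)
The plan is to run the energy argument outlined after Definition \ref{De:mult}, using the concrete multiplier $(\mu_1,\dots,\mu_6)=(\tfrac{13}{9},-\tfrac{25}{36},\tfrac19,0,0,0)$ of Proposition \ref{Pr:mult-six}, and to control the dissipative cross terms by first summing in $n$ and then invoking the Grenander--Szeg\"o theorem via the positivity property \eqref{pos-prop}. First I would test \eqref{ab} (with $q=6$) by $u^{n+6}-\sum_{j=1}^6\mu_ju^{n+6-j}$ to get \eqref{ab-energy}. Since the multiplier satisfies the A-stability condition \eqref{A} and $\alpha,\mu$ have no common divisor, Lemma \ref{Le:Dahl} supplies a positive definite symmetric $G=(g_{ij})$ and reals $\delta_0,\dots,\delta_6$ such that, by \eqref{G} with $v^i=u^{n+i}$, the first term in \eqref{ab-energy} equals
\[
E_{n+1}-E_n+\Big|\sum_{i=0}^6\delta_iu^{n+i}\Big|^2,
\qquad E_k:=\sum_{i,j=1}^6g_{ij}\big(u^{k+i-1},u^{k+j-1}\big).
\]
The second term equals $\tau\big[\|u^{n+6}\|^2-\sum_{j=1}^6\mu_j\langle u^{n+6},u^{n+6-j}\rangle\big]$, using $\|v\|^2=(Av,v)$ and $(Av,w)=\langle v,w\rangle$.

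Next I would sum \eqref{ab-energy} over $n=0,\dots,m-6$; the $E$-terms telescope to $E_{m-5}-E_0$, and after discarding the nonnegative squares I reach
\[
E_{m-5}+\tau\sum_{\ell=6}^m\Big[\|u^\ell\|^2-\sum_{j=1}^6\mu_j\langle u^\ell,u^{\ell-j}\rangle\Big]\le E_0.
\]
The heart of the proof is a lower bound for the dissipative sum. I would fix $\varepsilon\in(0,g_{\min})$, where $g_{\min}>0$ is the minimum of the trigonometric polynomial $g(x)=1-\sum_{j=1}^6\mu_j\cos(jx)$, positive by \eqref{pos-prop}, and split each summand as $\varepsilon\|u^\ell\|^2$ plus a remainder. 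The sum of remainders $\sum_{\ell=6}^m\big[(1-\varepsilon)\|u^\ell\|^2-\sum_j\mu_j\langle u^\ell,u^{\ell-j}\rangle\big]$ differs from the full symmetric band Toeplitz form $Q_{m+1}=\sum_{k,l=0}^m t_{k-l}\langle u^k,u^l\rangle$, with $t_0=1-\varepsilon$ and $t_{\pm j}=-\mu_j/2$ for $j=1,\dots,6$, only by a correction $R$ depending on the starting values $u^0,\dots,u^5$ alone, so $|R|\le C\sum_{j=0}^5\|u^j\|^2$ by Cauchy--Schwarz. The generating function of $T_{m+1}$ is $f_\varepsilon(x)=g(x)-\varepsilon\ge g_{\min}-\varepsilon>0$.

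Because the $u^\ell$ lie in $V$ rather than in $\C$, I would use the quadratic-form (integral) version of the Grenander--Szeg\"o theorem recorded in the footnote to Lemma \ref{Le:GS}: extending $\langle\cdot,\cdot\rangle$ sesquilinearly,
\[
Q_{m+1}=\frac1{2\pi}\int_{-\pi}^{\pi}f_\varepsilon(x)\,\Big\|\sum_{k=0}^m u^k\e^{-\i kx}\Big\|^2\,\mathrm{d}x\ge 0,
\]
so the remainder sum is $\ge-C\sum_{j=0}^5\|u^j\|^2$ and hence the dissipative sum is $\ge\varepsilon\sum_{\ell=6}^m\|u^\ell\|^2-C\sum_{j=0}^5\|u^j\|^2$. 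Inserting this and using positive definiteness of $G$, namely $E_{m-5}\ge\lambda_{\min}(G)\,|u^m|^2$ and $E_0\le\lambda_{\max}(G)\sum_{j=0}^5|u^j|^2$, I obtain
\[
\lambda_{\min}(G)\,|u^m|^2+\varepsilon\tau\sum_{\ell=6}^m\|u^\ell\|^2\le C\sum_{j=0}^5\big(|u^j|^2+\tau\|u^j\|^2\big),
\]
which is \eqref{stab-abg2} upon dividing by $\min(\lambda_{\min}(G),\varepsilon)$. I expect the main obstacle to be the third step: one must isolate the $O(1)$ boundary correction $R$ carried by the starting data, and—since the iterates are Hilbert-space valued—replace the eigenvalue statement of Grenander--Szeg\"o by its integral form so that the positivity \eqref{pos-prop} of $g$ yields $Q_{m+1}\ge 0$ directly.
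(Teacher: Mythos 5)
Your proposal is correct and follows essentially the same route as the paper's proof: test \eqref{ab} with the multiplier $(\tfrac{13}{9},-\tfrac{25}{36},\tfrac19,0,0,0)$, convert the $\alpha$-term via Dahlquist's lemma \eqref{G} and telescope, then sum in $n$ and bound the summed dissipative form from below by the Grenander--Szeg\"o theorem applied to a symmetric band Toeplitz form whose generating function is positive by \eqref{pos-prop}. The only differences are implementation details: the paper fixes $\varepsilon=1/32$ (via $\mu_0:=-31/32$ and the function $f$ of \eqref{f}) and works with a lower triangular Toeplitz matrix indexed over $6,\dotsc,m$, which produces boundary terms $\langle u^6,\cdot\rangle,\langle u^7,\cdot\rangle,\langle u^8,\cdot\rangle$ absorbed afterwards by Young's inequality, whereas you take a generic $\varepsilon<g_{\min}$ and extend the quadratic form to indices $0,\dotsc,m$ so the correction $R$ involves only the starting values, and you make explicit (via the integral identity in the footnote to Lemma \ref{Le:GS}) the passage from positive definiteness of the scalar Toeplitz matrix to nonnegativity of the $V$-valued form, a point the paper handles implicitly through $(Lx,x)=(L_sx,x)$.
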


\begin{proof}
 Taking in \eqref{ab} the inner product with
$u^{n+6}-\frac {13}9u^{n+5}+\frac {25}{36}u^{n+4}-\frac 19u^{n+3}$, cf.\ \eqref{ab-energy}, we have
 \begin{equation}
\label{abg31}
\Big (\sum\limits^6_{i=0}\alpha_i  u^{n+i},u^{n+6}-\sum_{j=1}^3\mu_j u^{n+6-j}\Big )+\tau I_{n+6}=0
\end{equation}
with
 \begin{equation}\label{2.a7}
I_{n+6}:=\Big \langle u^{n+6},u^{n+6}-\sum_{j=1}^3\mu_j u^{n+6-j}\Big \rangle.
\end{equation}

With the notation $\mathcal{U}^n:=(u^{n-5},u^{n-4},u^{n-3},u^{n-2},u^{n-1},u^{n})^\top$ and the norm $|\mathcal{U}^n|_G$ given by
 \begin{equation*}
 |\mathcal{U}^n|_G^2=\sum_{i,j=1}^6g_{ij}\left(u^{n-6+i},u^{n-6+j}\right),
\end{equation*}
using \eqref{G}, we have
\begin{equation}
\Big (\sum\limits^6_{i=0}\alpha_i  u^{n+i},u^{n+6}-\sum_{j=1}^3\mu_j u^{n+6-j}\Big )\geqslant |\mathcal{U}^{n+6}|_G^2-|\mathcal{U}^{n+5}|_G^2.
\end{equation}

Thus, \eqref{abg31} yields
\begin{equation}\label{2.87}
 |\mathcal{U}^{n+6}|_G^2-|\mathcal{U}^{n+5}|_G^2+\tau I_{n+6}\leqslant 0.
\end{equation}
Summing in \eqref{2.87} from $n=0$ to $n=m-6$,  we obtain
\begin{equation}\label{2.88}
 |\mathcal{U}^{m}|_G^2-|\mathcal{U}^{5}|_G^2+\tau \sum_{n=6}^mI_n\leqslant 0.
\end{equation}
 It remains to estimate the sum $\sum_{n=6}^mI_n$ from below; we have
 \begin{equation}
\label{abg36}
\sum_{n=6}^mI_n=\sum_{n=6}^m\Big \langle u^n,u^n-\sum_{j=1}^3\mu_j u^{n-j}\Big \rangle.
\end{equation}
First, motivated by the positivity of the function $f$ of \eqref{f}, to take advantage of the
positivity property  \eqref{pos-prop}, we introduce the notation $\mu_0:=-31/32,$
and rewrite \eqref{abg36} as
 \begin{equation}
\label{abg37}
\sum_{n=6}^mI_n=\frac 1{32}\sum_{n=6}^m\|u^n\|^2+J_m\ \text{ with }\
J_m:=-\sum_{j=0}^3\mu_j \sum_{i=1}^{m-5}\langle u^{5+i},u^{5+i-j} \rangle.
\end{equation}

Our next task is to rewrite $J_m$ in a form that will enable us to estimate
it from bellow in a desired way.
To this end, we introduce the lower triangular Toeplitz matrix $L=(\ell_{ij})\in \Re^{m-5,m-5}$ with entries
\[\ell_{i,i-j}=-\mu_j, \quad j=0,1,2,3, \quad i=j+1,\dotsc,m-5,\]
and all other entries equal zero.
With this notation, we have
\[\sum_{i,j=1}^{m-5}\ell_{ij} \langle u^{5+i},u^{5+j} \rangle=-\sum_{j=0}^3\mu_j \sum_{i=j+1}^{m-5}\langle u^{5+i},u^{5+i-j} \rangle,\]
i.e.,
\begin{equation}
\label{abg38}
\sum_{i,j=1}^{m-5}\ell_{ij} \langle u^{5+i},u^{5+j} \rangle=J_m+\langle u^6,\mu_1u^5+\mu_2u^4+\mu_3u^3 \rangle
+\langle u^7,\mu_2u^5\!+\!\mu_3u^4 \rangle+\langle u^8,\mu_3u^5 \rangle.
\end{equation}
%

Now, in view of the positivity of the generating function $f,$ see \eqref{f}, of the symmetric part
$L_s:=(L+L^\top)/2$ of the matrix $L,$ the Grenander--Szeg\"o theorem, see Lemma \ref{Le:GS},
ensures positive definiteness of $L_s,$ and thus also of $L$ itself, since $(Lx,x)=(L_sx,x)$ for $x\in \Re^{m-5}.$
Therefore, the expression on the left-hand side of \eqref{abg38} is nonnegative; hence,
\eqref{abg38} yields the desired estimate for $J_m$ from below,
i.e.,
\begin{equation}
\label{abg39}
J_m\geqslant -\langle u^6,\mu_1u^5+\mu_2u^4+\mu_3u^3 \rangle
-\langle u^7,\mu_2u^5\!+\!\mu_3u^4 \rangle-\langle u^8,\mu_3u^5 \rangle.
\end{equation}

From \eqref{2.88}, \eqref{abg37}   and \eqref{abg39}, we obtain
\begin{equation}\label{abg40}
\begin{aligned}
 |\mathcal{U}^{m}|_G^2+\frac 1{32}\sum_{n=6}^m\|u^n\|^2&\leqslant
 |\mathcal{U}^{5}|_G^2 +\tau\langle u^6,\mu_1u^5+\mu_2u^4+\mu_3u^3 \rangle\\
&+\tau \langle u^7,\mu_2u^5+\mu_3u^4 \rangle+\tau \langle u^8,\mu_3u^5 \rangle.
\end{aligned}
 \end{equation}
Now, with $c_1$ and $c_2$ the smallest and largest eigenvalues of the matrix $G,$ we have
\[  |\mathcal{U}^{m}|_G^2\geqslant c_1|u^m|^2\quad\text{and}\quad |\mathcal{U}^{5}|_G^2\leqslant
c_2\sum_{j=0}^5 |u^j|^2;\]
furthermore, the terms $|\langle u^i,u^j \rangle|$ with $i>j$ can be estimated in the form
$|\langle u^i,u^j \rangle|\leqslant \varepsilon \|u^i\|^2+\|u^j\|^2/(4\varepsilon)$
with $\varepsilon<1/32.$  This leads then to the desired stability estimate
\eqref{stab-abg2}.

Let us also note that, due to the fact that $\mu_4=\mu_5=\mu_6=0,$
the terms $\|u^2\|^2, \|u^1\|^2$ and $\|u^0\|^2$ are actually not needed
on the right-hand side of \eqref{stab-abg2}.
\end{proof}

\subsection{Time-dependent operators}\label{SSe:t-dep}
In this section we use a perturbation argument to extend the stability result to the case
of time-dependent selfadjoint operators $A (t) : V \to V', t\in [0,T].$
We fix an $s\in [0,T]$ and define the norm on $V$ in terms of  $A (s), \|v\|:=|A (s)^{1/2}v|.$

Our structural assumptions are that all operators $A(t), t\in [0,T],$ share the same domain,
produce equivalent norms on $V,$
\[|A(t)^{1/2}v|\leqslant c |A(\tilde t)^{1/2}v|\quad  \forall t,\tilde t\in [0,T]\ \, \forall v\in V,\]
and $A(t): V\to V'$ is of  bounded variation with respect to $t,$
\begin{equation}
\label{BV-A}
 \|\big (A(t)- A(\tilde t)\big )v\|_\star \leqslant  [\sigma(t)-\sigma(\tilde t) ] \|v\|,\quad
 0\leqslant \tilde t\leqslant t\leqslant T, \quad \forall v\in V,
\end{equation}
with an increasing function $\sigma : [0,T]\to \Re.$ 

First, for given perturbation terms $v^6,\dotsc,v^N\in V',$ we let $u^6,\dotsc,u^N$
satisfy the perturbed six-step BDF method
\begin{equation}
\label{ab-v}
\sum_{i=0}^6 \alpha_i u^{n+i}+\tau A u^{n+6}=\tau v^{n+6},\quad n=0,\dotsc,N-6,
\end{equation}
i.e., the scheme \eqref{ab} for $q=6$ with perturbed right-hand side,
assuming that starting approximations $u^0, \dotsc, u^5$ are given.
Then, it is easily seen that we have the following stability result
\begin{equation}
\label{stab-v}
|u^n|^2 + \tau \sum_{\ell=6}^n \|u^\ell\|^2
\leqslant C\sum_{j=0}^5 \big (|u^j|^2+\tau \|u^j\|^2\big ) + C\tau \sum_{\ell=6}^n \|v^\ell\|^2_\star,\quad n=6,\dotsc,N,
\end{equation}
with a  constant $C$  independent of $\tau$ and $n$. Indeed, the terms that are due to the perturbation $v^{n+6},$
namely, $\big ( v^{n+6}, u^{n+6}-\sum_{j=1}^3\mu_j u^{n+6-j}\big )$,
can be easily estimated in the form
\[\big (  v^{n+6}, u^{n+6}-\sum_{j=1}^3\mu_j u^{n+6-j}\big ) \leqslant C_\varepsilon \|v^{n+6}\|^2_\star+\varepsilon \sum_{j=1}^3|\mu_j|\, \|u^{n+6-j}\|^2,\]
with sufficiently small $\varepsilon$ such that the terms involving $\|u^i\|^2$ can be absorbed in the corresponding
sum on the left-hand side.

We shall next use \eqref{stab-v} to extend the stability result \eqref{stab-abg2} to the case of
time-dependent operators. Before that, let us note that with  $v^\ell=f(t^\ell),$  \eqref{stab-v} is a stability
result for the inhomogeneous equation $u' (t) + Au(t) = f(t)$ with respect to both the starting approximations
and the forcing term. Furthermore, with $v^\ell$ the consistency error of the method,
i.e., the amount by which the exact solution $u$ misses satisfying the numerical method \eqref{ab},
with $q=6,$
\begin{equation}
\label{ab-v-cons}
\tau v^{n+6}=\sum_{i=0}^6 \alpha_i u(t^{n+i})+\tau A u(t^{n+6})-\tau f(t^{n+6}),\quad n=0,\dotsc,N-6,
\end{equation}
the error $e^\ell:=u(t^\ell)-u^\ell, \ell=0,\dotsc,N,$ satisfies \eqref{ab-v}. In this case, the stability
result \eqref{stab-v}, in combination with the trivial estimate of the consistency error,
leads to optimal order error estimates.

Now, the six-step BDF method for the initial value problem \eqref{ivp} with time-dependent
operator $A(t)$ is
\begin{equation}
\label{ab-time}
\sum_{i=0}^6 \alpha_i u^{n+i}+\tau A(t^{n+6}) u^{n+6}=0,\quad n=0,\dotsc,N-6,
\end{equation}
assuming that starting approximations $u^0, \dotsc, u^5$ are given.
Let us now fix an $6\leqslant m\leqslant N.$ From \eqref{ab-time}, we obtain
\begin{equation}
\label{ab-time2}
\sum_{i=0}^6 \alpha_i u^{n+i}+\tau A(t^m) u^{n+6}=\tau \big [A(t^m)-A(t^{n+6})\big ] u^{n+6},\quad n=0,\dotsc,m-6.
\end{equation}
Since the time $t$ is frozen at $t^m$ in the operator $A(t^m)$ on the left-hand side,
we can apply the already-established stability estimate \eqref{stab-v}
with  perturbation terms $v^\ell:= [A(t^m)-A(t^\ell) ] u^\ell$ and obtain
\begin{equation}
\label{ab-time3}
|u^m|^2 + \tau \sum_{\ell=6}^m \|u^\ell\|^2
\leqslant  C\sum_{j=0}^5\big (|u^j|^2+\tau \|u^j\|^2\big ) + C M^m
\end{equation}
 with a  constant $C$  independent of $\tau$ and $m$, and
\begin{equation}
\label{ab-time4}
M^m:=\tau\sum_{\ell=6}^m \| [A(t^m)-A(t^\ell) ] u^\ell\|^2_\star.
\end{equation}

Now, with
\[E^\ell:=\tau \sum_{j=6}^\ell \|u^j\|^2,\quad \ell=6,\dotsc,m,\quad  E^5:=0,\]
estimate \eqref{ab-time3} yields
\begin{equation}
\label{ab-time5}
E^m \leqslant C\sum_{j=0}^5 \big (|u^j|^2+\tau \|u^j\|^2\big ) + C M^m.
\end{equation}
Furthermore, in view of the bounded variation condition \eqref{BV-A},
\[M^m\leqslant \tau\sum\limits^{m-1}_{\ell=6}\big [\sigma(t^m)-\sigma(t^\ell)\big ]^2\|u^\ell\|^2
=\sum\limits^{m-1}_{\ell=6}\big [\sigma(t^m)-\sigma(t^\ell)\big ]^2(E^\ell-E^{\ell-1}),\]
whence, by summation be parts, we have
\begin{equation}
\label{ab-time6}
M^m\leqslant \sum\limits^{m-1}_{\ell=6}a_\ell E^\ell,
\end{equation}
with $a_\ell:=\big [\sigma(t^m)-\sigma(t^\ell)\big ]^2-\big [\sigma(t^m)-\sigma(t^{\ell+1})\big ]^2,$
and \eqref{ab-time5} yields
\begin{equation}
\label{ab-time7}
E^m \leqslant C\sum_{j=0}^5\big (|u^j|^2+\tau \|u^j\|^2\big )+C\sum\limits^{m-1}_{\ell=6}a_\ell E^\ell.
\end{equation}
Since the sum $\sum_{\ell=6}^{m-1} a_\ell$ is uniformly bounded by a constant independent
of $m$ and the time step $\tau,$
\[\sum_{\ell=6}^{m-1} a_\ell=\big [\sigma(t^m)-\sigma(t^6)\big ]^2\leqslant \big [\sigma(T)-\sigma(0)\big ]^2,\]
a discrete Gronwall-type argument  applied to \eqref{ab-time7}
 leads to
\begin{equation}
\label{ab-time8}
E^m\leqslant C\sum\limits^5_{j=0}   \big (|u^j|^2+\tau \|u^j\|^2\big ) .
\end{equation}
Combining \eqref{ab-time3} with \eqref{ab-time6} and \eqref{ab-time8},
we obtain the desired stability estimate  \eqref{stab-abg2} for the case
of time-dependent operators.



\bibliographystyle{amsplain}

\end{document}